\theoremstyle{definition} 
\newtheorem{Unity}{Unity}[section] 
\newtheorem{Definition}[Unity]{Definition} 
\theoremstyle{plain} 
\newtheorem{Theorem}[Unity]{Theorem}
\newtheorem*{Theorem*}{Theorem}
\newtheorem{Proposition}[Unity]{Proposition}
\newtheorem{Lemma}[Unity]{Lemma}
\theoremstyle{remark} 
\newcommand{\GL}{\mathrm{GL}}
\newcommand{\E}{{\mathscr E}}
\newcommand{\F}{{\mathscr F}}
\newcommand{\Ker}{\mathrm{Ker}}
\newcommand{\Omg}{\mathrm{\Omega}}
\newcommand{\rk}{\mathrm{rk}}
\newcommand{\dg}{\mathrm{deg}}
\begin{document}

\title{Instability of Truncated Symmetric Powers of sheaves}
\author{Lingguang Li}
\author{Fei Yu}
\address{School of Mathematical Sciences, Fudan University, Shanghai, P. R. China}
\email{LG.Lee@math.ac.cn}
\address{Universit{\"a}t Mainz, Fachbereich 17, Mathemati, 55099 Mainz, Germany}
\email{yuf@uni-mainz.de}
\maketitle

\begin{abstract} Let $X$ be a smooth projective variety of dimension $n$ over an
algebraically closed field $k$ of characteristic $p>0$. Let $F_X:X\rightarrow X$ be the absolute Frobenius morphism, and $\E$ a torsion free sheaf on $X$. We give a upper bound of instability of truncated symmetric powers $\mathrm{T}^l(\E)(0\leq l\leq\rk(\E)(p-1))$ in terms of $L_{\max}(\Omg^1_X)$, $\mathrm{I}(\Omg^1_X)$ and $\mathrm{I}(\E)$ (Theorem \ref{InstabTl}). As an application, We obtain a upper bound of Frobenius direct image ${F_X}_*(\E)$ and some sufficient conditions of slope semi-stability of ${F_X}_*(\E)$. In addition, we study the slope (semi)-stability of sheaves of locally exact (closed) forms $B^i_X$ ($Z^i_X$).
\end{abstract}

\section{Introduction}

Let $k$ be an algebraically closed field $k$ of characteristic $p>0$, $X$ a smooth projective variety of dimension $n$ over $k$ with a fixed ample divisor $H$. Let $\E$ be a torsion free sheaf on $X$, the slope of $\E$ is define as
$\mu(\E):=\frac{c_1(\E)\cdot H^{n-1}}{\rk(\E)}$.
Then $\E$ is called \emph{slop $($semi$)$-stable} if $\mu(\F)<(\leq)\mu(\E)$ for any nonzero subsheaf $\F\subsetneq\E$ with $\rk(\F)<\rk(\E)$.
For any torsion free sheaf $\E$, there exists a unique filtration
$$\mathrm{HN}_\bullet(\E):0=\mathrm{HN}_0(\E)\subset \mathrm{HN}_1(\E)\subset\cdots\subset \mathrm{HN}_{m-1}(\E)\subset \mathrm{HN}_m(\E)=\E$$
which is called \emph{Harder-Narasimhan filtration} of $\E$, such that
\begin{itemize}
    \item[$(a)$.] $gr^{\mathrm{HN}}_i(\E):=\mathrm{HN}_i(\E)/\mathrm{HN}_{i-1}(\E)(1\leq i\leq m)$ are slope semistable.
    \item[$(b)$.] $\mu_{\max}(\E):=\mu(gr^{\mathrm{HN}}_1(\E))>\cdots>\mu(gr^{\mathrm{HN}}_{m-1}(\E))>\mu(gr^{\mathrm{HN}}_m(\E))=:\mu_{\mathrm{min}}(\E)$.
\end{itemize}
The \emph{instability} of $\E$ was defined as rational number
$$\mathrm{I}(\E):=\mu_{\max}(\E)-\mu_{\mathrm{min}}(\E),$$
which is a numerical index measures how far a torsion free sheaf from being slope semi-stable, and $\E$ is slope semi-stable if and only if $\mathrm{I}(\E)=0$.

The absolute Frobenius morphism $F_X:X\rightarrow X$ is induced by homomorphism $\mathscr{O}_X\rightarrow \mathscr{O}_X$, $f\mapsto f^p$.
If for any integer $m\geq 0$, $m$-th Frobenius pullback $F^{m*}_X(\E)$ is slope (semi)-stable, then $\E$ is called a \emph{slope strongly $($semi$)$-stable sheaf}. In general, the slope (semi)-stability of torsion free sheaves does not preserved under Frobenius pull back $F^*_X$ (cf. \cite{Gieseker73}, \cite{Raynaud82}). However, V. Mehta, A. Ramanathan \cite[Theorem 2.1]{MehtaRamanathan83} showed that if $\mu_{\max}(\Omg^1_X)\leq 0$ then all slope semi-stable sheaves on $X$ are slope strongly semi-stable, and if $\mu_{\max}(\Omg^1_X)<0$ then all slope stable sheaves on $X$ are slope strongly stable. There are many classes of varieties satisfy $\mu_{\max}(\Omg^1_X)\leq 0$, such as homogeneous spaces, Abelian varieties, toric varieties and so on. The tensor product, exterior and symmetric products of slop strongly semi-stable sheaves are still slope strongly semi-stable in positive characteristic (cf. page 9 of \cite{Langer05}).

In general, A. Langer \cite[Theorem 2.7]{Langer04i} proved that for any torsion free sheaf $\E$ on $X$, there exists some integer $m_0\geq 0$ such that all quotients of the Harder-Narasimhan filtration of $F^{m*}_X(\E)$ are slope strongly semi-stable for any $m\geq m_0$. Moreover, A. Langer introduced the following definition $$L_{\max}(\E)=\lim\limits_{m\rightarrow\infty}\frac{\mu_{\max}({F_X^m}^*\E)}{p^m},~L_{\mathrm{min}}(\E)=\lim\limits_{m\rightarrow\infty}\frac{\mu_{\mathrm{min}}({F_X^m}^*\E)}{p^m},$$
and proved that \cite[Corollary 6.2]{Langer04i}
\begin{equation}\label{Lminmax}
L_{\max}(\E)-L_{\mathrm{min}}(\E)\leq\frac{\rk(\E)-1}{p}
\cdot\max\{0,~L_{\max}(\Omg^1_X)\}+\mathrm{I}(\E).
\end{equation}

On the other hand, Frobenius push-forwards also does not preserve the slope semi-stability of torsion free sheaves. However, for a smooth projective curve of genus $g\geq 2$, V. Mehta and C. Pauly \cite{MehtaPauly07} showed that Frobenius direct images of slope semi-stable sheaves are also slope semi-stable. At the same time, X. Sun \cite{Sun08} proved that on a smooth projective curve $X$ of genus $g\geq 1$, the Frobenius push-forwards preserves the slope semi-stability of locally free sheaves, and if genus $g\geq 2$ then Frobenius push-forwards preserves the slope stability of locally free sheaves via a different approach. In the case of higher dimension, X. Sun \cite[Corollary 4.9]{Sun08} showed that: For any torsion free sheaf $\E$ on $X$, if $\mu(\Omg^1_X)\geq 0$, then
\begin{equation}\label{InsDIFrob}
\mathrm{I}({F_X}_*(\E))\leq p^{n-1}\cdot\rk(\E)\cdot\max\{\mathrm{I}(\E\otimes_{\mathscr{O}_X}\mathrm{T}^l(\Omg^1_X))~|~0\leq l\leq n(p-1)\},
\end{equation}
where $\mathrm{T}^l(\Omg^1_X)(0\leq l\leq n(p-1))$ are truncated symmetric powers of $\Omg^1_X$, which are the associated bundles of $\Omg^1_X$ through some elementary representations of $\GL(n)$. Therefore, in order to bound the instability of Frobenius direct images, we should study the instabilities of $\mathrm{T}^l(\Omg^1_X)(0\leq l\leq n(p-1))$.

More generally, we study the instability of truncated symmetric powers of any torsion free sheaf $\E$. One of the main results of this paper is to give a upper bound of $\mathrm{I}(\mathrm{T}^l(\E))$ in terms of $L_{\max}(\Omg^1_X)$, $\mathrm{I}(\Omg^1_X)$ and $\mathrm{I}(\E)$ (Theorem \ref{InstabTl}).

As an application, we obtain a upper bound of Frobenius direct image ${F_X}_*(\E)$ in terms of $L_{\max}(\Omg^1_X)$, $\mathrm{I}(\Omg^1_X)$ and $\mathrm{I}(\E)$ (Theorem \ref{InstabDirIm}) and some sufficient conditions of slope semi-stability of ${F_X}_*(\E)$ (when $\mu(\Omg^1_X)\geq 0$) (Proposition \ref{FroDirIm}). In particular, when the cotangent sheaf $\Omg^1_X$ is slope strongly semi-stable with $\mu(\Omg^1_X)\geq 0$, then the slope strongly semi-stability of $\E$ implies slope semi-stability of ${F_X}_*(\E)$.

In addition, we study the slope (semi)-stability of the sheaves of locally exact (closed) differential $i$-forms $B^i_X$ ($Z^i_X$) on the higher dimensional smooth projective variety in positive characteristic. X. Sun \cite{Sun10ii} showed that when $X$ is a smooth projective surface such that $\Omg^1_X$ is slope semi-stable with $\mu(\Omg^1_X)>0$, then $Z^1_X$ is slope semi-stable if $\mathrm{char}(k)=3$ and $Z^1_X$ is slope stable if $\mathrm{char}(k)>3$. However, we show that $Z^i_X(1\leq i<\frac{n}{2})$ are never slope semi-stable if $n\geq 3$ and $\mu(\Omg^1_X)>0$ (Proposition \ref{Prop:InstZiX}). Moreover, we show that if $\Omg^1_X$ is slope semi-stable with $\mu(\Omg^1_X)=0$, then $B^i_X$ and $Z^i_X$ are slope strongly semi-stable for any $1\leq i\leq n$. At last, we prove that if $\mathrm{T}^l(\Omg^1_X)(0\leq l\leq n(p-1))$ are slope semi-stable, then $B^n_X$ is slope strongly semi-stable if $\mu(\Omg^1_X)=0$ and $B^n_X$ is slope stable if $\mu(\Omg^1_X)>0$. This generalize the result of X. Sun \cite[Lemma 3.2]{Sun10ii}.

In this paper, unless otherwise explicitly declared, $k$ is an algebraically closed field of characteristic $p>0$, and $X$ is a smooth projective variety of dimension $n$ over $k$ with a fixed ample divisor $H$.

\section{Preliminaries}

The canonical filtration was first introduced by Joshi-Ramanan-Xia-Yu in \cite{JRXY06} in the curve case. X. Sun \cite{Sun08} generalized the canonical filtration to higher dimensional case and used this to study slope (semi)-stability of Frobenius direct images.

\begin{Definition}\label{CanFil}
Let $\E$ be a coherent sheaf on $X$,
$$\nabla_{\mathrm{can}}:F^*_X{F_X}_*(\E)\rightarrow F^*_X{F_X}_*(\E)\otimes_{\mathscr{O}_X}\Omg^1_X$$ the canonical connection of $F^*_X{F_X}_*(\E)$ (cf. \cite{Katz70} Section 5). Set
\begin{eqnarray*}
V_1&:=&\Ker(F^*_X{F_X}_*(\E)\twoheadrightarrow\E)),\\
V_{l+1}&:=&\Ker\{V_l\stackrel{\nabla}{\rightarrow}F^*_X{F_X}_*(\E)\otimes_{\mathscr{O}_X}\Omg^1_X\rightarrow (F^*_X{F_X}_*(\E)/V_l)\otimes_{\mathscr{O}_X}\Omg^1_X\}.
\end{eqnarray*}
The filtration ${\mathbb{F}^{\mathrm{can}}_\E}_\bullet:F^*_X{F_X}_*(\E)=V_0\supset V_1\supset V_2\supset\cdots$,
is called the \emph{canonical filtration} of $F^*_X{F_X}_*(\E)$.
\end{Definition}

\begin{Theorem}\cite[Theorem 3.7]{Sun08}\label{CanonicalFiltration}
Let $\E$ be a locally free sheaf on $X$. Then the canonical filtration of
$F^*_X{F_X}_*(\E)$ is\\
\centerline{$0=V_{n(p-1)+1}\subset V_{n(p-1)}\subset\cdots\subset V_1\subset V_0=F^*_X{F_X}_*(\E)$} such that
$\nabla^l:V_l/V_{l+1}\cong\E\otimes_{\mathscr{O}_X}\mathrm{T}^l(\Omg^1_X),~0\leq
l\leq n(p-1)$, where $\mathrm{T}^l(\Omg^1_X)$ are the truncated symmetric powers of $\Omg^1_X$.
\end{Theorem}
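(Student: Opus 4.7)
\medskip

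\noindent\textbf{Proof proposal.} The plan is to reduce to an étale-local computation. Since the statement is local in the étale topology on $X$, I may choose an étale chart with local parameters $x_1,\dots,x_n\in\mathscr{O}_X$ and trivialize $\E$ so that $\E\cong\mathscr{O}_X^{\oplus r}$. The key local identity I will exploit is that $\mathscr{O}_X$ is a free module over $\mathscr{O}_X^{\,p}$ with basis $\{x^I:=x_1^{i_1}\cdots x_n^{i_n}\mid 0\leq i_j\leq p-1\}$, which gives an $\mathscr{O}_X$-algebra identification
\[
F_X^*F_{X*}\mathscr{O}_X\;=\;\mathscr{O}_X\otimes_{\mathscr{O}_X^{\,p}}\mathscr{O}_X\;\cong\;\mathscr{O}_X[y_1,\dots,y_n]/(y_1^p,\dots,y_n^p),\qquad y_i:=x_i\otimes 1-1\otimes x_i.
\]
Tensoring with $\E$ yields a local description of $F_X^*F_{X*}(\E)$ as a free $\mathscr{O}_X$-module with basis $\{y^I\otimes e_k\mid 0\leq i_j\leq p-1,\ 1\leq k\leq r\}$.

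Next I will compute the canonical connection in these coordinates. Because $\nabla_{\mathrm{can}}$ is characterized by having $1\otimes s$ flat for $s$ a local section of $F_{X*}\E$, a direct expansion (using that $y_i^p=x_i^p\otimes 1-1\otimes x_i^p=0$) gives the clean formula $\nabla_{\mathrm{can}}(y_i)=dx_i$ and hence, by the Leibniz rule,
\[
\nabla_{\mathrm{can}}\bigl(f\cdot y^I\otimes e_k\bigr)\;=\;df\cdot y^I\otimes e_k\;+\;f\cdot\sum_{l=1}^{n}i_l\,y^{I-e_l}\otimes e_k\otimes dx_l.
\]
With this in hand I will prove by induction on $l$ that the canonical filtration coincides locally with the $y$-adic filtration: $V_l$ is the $\mathscr{O}_X$-submodule generated by $\{y^I\otimes e_k\mid |I|\geq l\}$. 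The base case $l=0,1$ is immediate from $\mu(f\otimes(e_k\otimes x^I))=f\cdot x^I\cdot e_k$, which identifies $V_1$ with the ideal $(y_1,\dots,y_n)$. For the inductive step, the formula above shows that on $V_l$ the connection takes values in $V_{l-1}\otimes\Omg^1_X$, while the leading term of degree $l{-}1$ of $\nabla_{\mathrm{can}}(fy^I\otimes e_k)$ (for $|I|=l$) is $f\sum_l i_l\,y^{I-e_l}\otimes e_k\otimes dx_l$, which vanishes modulo $V_l\otimes\Omg^1_X$ precisely when $f=0$; this pins down $V_{l+1}$ as the span of monomials of $y$-degree $\geq l+1$. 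In particular $V_{n(p-1)+1}=0$, giving the claimed length.

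The last step is the identification of the graded pieces with $\E\otimes\mathrm{T}^l(\Omg^1_X)$. The local map $y^I\otimes e_k\mapsto e_k\otimes (dx)^I$ defines an $\mathscr{O}_X$-linear isomorphism of $V_l/V_{l+1}$ onto $\E\otimes\mathrm{T}^l(\Omg^1_X)$, where $\mathrm{T}^l(\Omg^1_X)$ is the degree-$l$ piece of $\mathrm{Sym}^{\bullet}(\Omg^1_X)/(\text{$p$-th powers})$. To show this is intrinsic I will use the iterated connection: $\nabla_{\mathrm{can}}^l$ induces a canonical $\mathscr{O}_X$-linear map
\[
\overline{\nabla^l}\;:\;V_l/V_{l+1}\;\longrightarrow\;(V_0/V_1)\otimes(\Omg^1_X)^{\otimes l}\;=\;\E\otimes(\Omg^1_X)^{\otimes l},
\]
and the local formula shows $\overline{\nabla^l}(y^I\otimes e_k)=(\prod_l i_l!)\,e_k\otimes dx^{\otimes I}$, which lands in the image of the symmetrization map onto $\E\otimes\mathrm{T}^l(\Omg^1_X)$ and is (up to the nonzero constant $\prod i_l!$, valid because each $i_l\leq p-1$) an isomorphism onto that image. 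Coordinate-independence follows because both source and target are canonically defined, and the map $\overline{\nabla^l}$ is built from the canonical data.

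The main obstacle I anticipate is precisely the last point: checking that the local identification $y^I\leftrightarrow dx^I$ glues to a global isomorphism $V_l/V_{l+1}\cong\E\otimes\mathrm{T}^l(\Omg^1_X)$ independent of the chosen coordinates. The invariances needed are (i) that $y_i:=x_i\otimes 1-1\otimes x_i$ transforms under a change of étale coordinates like $dx_i$ modulo $V_2$, which is exactly what is needed for $V_l/V_{l+1}$ to descend to the symmetric (truncated) tensor product, and (ii) that the factors $\prod i_l!$ are units in characteristic $p$ for $i_l\leq p-1$, which is why the truncation at $p$-th powers is the correct one and why the filtration stops at $l=n(p-1)$.
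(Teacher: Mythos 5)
Your argument is correct and follows essentially the same route as the proof in the cited source (X.~Sun, \emph{Invent. Math.} 173 (2008), Theorem 3.7), which the present paper quotes without reproving: compute in local \'etale coordinates, identify the canonical filtration with the $(y_1,\dots,y_n)$-adic filtration of $\mathscr{O}_X\otimes_{\mathscr{O}_X^p}\E$, and use the iterated connection $\nabla^l$ to embed $V_l/V_{l+1}$ canonically into $\E\otimes(\Omg^1_X)^{\otimes l}$ with image $\E\otimes\mathrm{T}^l(\Omg^1_X)$. The only cosmetic imprecision is that $\overline{\nabla^l}(y^I\otimes e_k)$ is $e_k\otimes v(I)$ in the notation of Section 2, i.e.\ the unit multiple $\prod_j i_j!$ of the monomial symmetric tensor rather than of a single tensor monomial, which does not affect the conclusion.
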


We recall the construction and properties of truncated symmetric powers of vector spaces. Let $V$ is a $n$-dimensional vector space over $k$ with standard representation of $\GL_n(k)$, $S_l$ the symmetric group of $l$ elements with a natural action on $V^{\otimes l}$ by $(v_1\otimes\cdots\otimes v_l)\cdot\sigma=v_{\sigma_{(1)}}\otimes\cdots\otimes v_{\sigma_{(l)}}$ for $v_i\in V$ and $\sigma\in S_l$.
Let $e_1,\cdots,e_n$ be a basis of $V$. For any non-negative partition $(k_1,\cdots,k_n)$ of $l$ (i.e. $l=\sum\limits^n_{i=1}k_i$, $k_i\geq 0,~1\leq i\leq n$), define $v(k_1,\cdots,k_n):=\sum\limits_{\sigma\in S_l}(e^{\otimes k_1}_1\otimes\cdots\otimes e^{\otimes k_n}_n)\cdot\sigma$.
Let $\mathrm{T}^l(V)\subset V^{\otimes l}$ be the linear subspace generated by all vectors\\ \centerline{$\{v(k_1,\cdots,k_n)~|~l=\sum\limits^n_{i=1}k_i,~k_i\geq 0,~1\leq i\leq n\}$.}
Then $\mathrm{T}^l(V)$ is a $\GL_n(k)$ sub-representation of $V^{\otimes l}$. Let $F^*_kV$ be the Frobenius twist of the standard representation $V$ of $\GL_n(k)$ through the homomorphism $\GL_n(k)\rightarrow \GL_n(k):((a_{ij})_{n\times n}\mapsto(a^p_{ij})_{n\times n})$. Define $k$-linear maps
\begin{equation}\label{Phiq}
\phi_q:\mathrm{Sym}^{l-q\cdot p}(V)\otimes_k\bigwedge^q(F^*_kV)\rightarrow \mathrm{Sym}^{l-(q-1)\cdot p}(V)\otimes_k\bigwedge^{q-1}(F^*_kV)
\end{equation}
\centerline{$f\otimes e_{k_1}\wedge\cdots\wedge e_{k_q}\mapsto\sum\limits^q_{i=1}(-1)^{i-1}e^p_{k_i}f\otimes e_{k_1}\wedge\cdots\wedge\widehat{e}_{k_i}\wedge\cdots\wedge e_{k_q}$}

\begin{Lemma}\cite[Proposition 3.5]{Sun08}\label{RepT}
There exists an exact sequence of $\GL_n(k)$-representations\\
\centerline{$0\rightarrow \mathrm{Sym}^{l-l(p)\cdot p}(V)\otimes_k\bigwedge\limits^{l(p)}F^*_k(V)\stackrel{\phi_{l(p)}}{\rightarrow}\mathrm{Sym}^{l-(l(p)-1)\cdot p}(V)\otimes_k\bigwedge\limits^{l(p)-1}F^*_k(V)\rightarrow$}
\centerline{$\cdots\rightarrow \mathrm{Sym}^{l-q\cdot p}(V)\otimes_k\bigwedge\limits^{q}F^*_k(V)\stackrel{\phi_{q}}{\rightarrow}\mathrm{Sym}^{l-(q-1)\cdot p}(V)\otimes_k\bigwedge\limits^{q-1}F^*_k(V)\rightarrow\cdots$}
\centerline{$\rightarrow \mathrm{Sym}^{l-p}(V)\otimes_kF^*_k({V})\stackrel{\phi_{1}}{\rightarrow} \mathrm{Sym}^l(V)\stackrel{\phi_0}{\rightarrow}\mathrm{T}^l(V)\rightarrow 0.$}
\end{Lemma}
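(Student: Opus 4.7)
The plan is to recognize the sequence, apart from the rightmost term, as the degree $l$ strand of a classical Koszul resolution.

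Set $R=\mathrm{Sym}^\bullet(V)=k[e_1,\ldots,e_n]$ with the standard grading $\deg(e_i)=1$, and let $W=F^*_kV$ be the Frobenius-twisted copy of $V$, with basis $e_1,\ldots,e_n$ placed in degree $p$. Formula (\ref{Phiq}) defining $\phi_q$ is exactly the Koszul differential for $K_\bullet = K_\bullet(e_1^p,\ldots,e_n^p;R)$; the maps are $\GL_n(k)$-equivariant by construction, and the grading convention on $W$ makes each $\phi_q$ preserve total degree. Extracting the total-degree-$l$ component recovers precisely the complex in the statement, the leftmost term corresponding to $q=l(p):=\lfloor l/p\rfloor$ (beyond which the symmetric power factor vanishes).

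Next I would invoke the classical fact that the Koszul complex of a regular sequence in a commutative ring is acyclic in positive degrees and has $H_0$ equal to the quotient by the ideal generated by the sequence. Since $e_1^p,\ldots,e_n^p$ are algebraically independent in $R$, they form a regular sequence, so $H_q(K_\bullet)=0$ for $q>0$ and $H_0(K_\bullet)=R/(e_1^p,\ldots,e_n^p)$. Passing to degree $l$ yields the desired exact sequence, up to identifying the cokernel with $\mathrm{T}^l(V)$.

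For that identification, I take $\phi_0\colon\mathrm{Sym}^l(V)\to V^{\otimes l}$ to be the map $e_1^{k_1}\cdots e_n^{k_n}\mapsto v(k_1,\ldots,k_n)$, whose image equals $\mathrm{T}^l(V)$ by the very definition in the excerpt. The essential characteristic-$p$ input is that, upon collecting equal tensors, $v(k_1,\ldots,k_n) = k_1!\,k_2!\cdots k_n!\cdot\bigl(\sum_{\text{distinct orbit representatives}}\cdots\bigr)$; hence $v(k_1,\ldots,k_n)=0$ whenever some $k_i\geq p$, and moreover the $v(k_1,\ldots,k_n)$ with $0\leq k_i<p$ are linearly independent, since their nonzero coefficients sit on pairwise disjoint $S_l$-orbits of basis tensors. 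This forces $\phi_0\circ\phi_1=0$, so $\phi_0$ factors through $\mathrm{Sym}^l(V)/(e_1^p,\ldots,e_n^p)_l$; the factored map is surjective by construction, and a dimension count (both sides have bases indexed by $n$-tuples $(k_1,\ldots,k_n)$ with $\sum k_i=l$ and $0\leq k_i<p$) upgrades it to an isomorphism.

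The only delicate point is the characteristic-$p$ vanishing and independence computation just described; the rest is the standard Koszul resolution of a regular sequence, assembled $\GL_n(k)$-equivariantly from the natural functorial descriptions of $\mathrm{Sym}$, $\bigwedge$, and the Frobenius twist.
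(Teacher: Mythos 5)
The paper does not prove this lemma; it is quoted verbatim from \cite[Proposition 3.5]{Sun08}, so there is no in-text proof to compare against. Your argument is correct and is, in substance, the standard proof of this fact: the complex is the total-degree-$l$ strand of the Koszul complex of $e_1^p,\dots,e_n^p$ in $k[e_1,\dots,e_n]$, regularity of that sequence gives exactness away from the right end, and the characteristic-$p$ identity $v(k_1,\dots,k_n)=k_1!\cdots k_n!\cdot(\text{sum over distinct rearrangements})$ identifies the degree-$l$ part of $k[e_1,\dots,e_n]/(e_1^p,\dots,e_n^p)$ with $\mathrm{T}^l(V)$ via your dimension count. Only two justifications should be tightened: algebraic independence of $e_1^p,\dots,e_n^p$ is not by itself the reason they form a regular sequence --- rather, $k[e_1,\dots,e_n]$ is free over $k[e_1^p,\dots,e_n^p]$ (equivalently, the quotient is finite-dimensional, so they are a homogeneous system of parameters in a Cohen--Macaulay ring); and the $\GL_n(k)$-equivariance of the basis-dependent formula for $\phi_q$ should be justified by exhibiting it as the composite of the comultiplication $\bigwedge^q F^*_kV\to F^*_kV\otimes\bigwedge^{q-1}F^*_kV$ with the map $F^*_kV\to\mathrm{Sym}^p(V)$, $v\mapsto v^p$, which is additive and $k$-linear out of the Frobenius twist precisely because $\mathrm{char}(k)=p$ --- this is where the twist $F^*_kV$ is genuinely needed.
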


Let $\E$ be a locally free sheaf of rank $n$ on $X$, $\mathrm{T}^l(\E)\subset\E^{\otimes l}$ is defined to be the sheaf of sections of the associated vector bundle of the frame bundle of $\E$ (principal $\GL_n(k)$-bundle) through the representation $\mathrm{T}^l(V)$.

\section{Instability of Truncated Symmetric Powers}

Let we recall the definition and some properties of Harder-Narasimhan polygons of sheaves, which was first introduced by  S. S. Shatz in \cite{Shatz77}.

For any coherent sheaf $\F$ on $X$, we may associate to it the point $\mathrm{p}(\rk(\F),\dg(\F))$ in the plane with coordinates rank and degree.
Let $\E$ be a torsion free sheaf,
$$\mathrm{HN}_\bullet(\E):0=\E_0\subset \E_1\subset\cdots\subset \E_{m-1}\subset \E_m=\E$$
the Harder-Narasimhan filtration of $\E$. Consider the points
$$\mathrm{p}(\rk(\E_i),\dg(\E_i))(1\leq i\leq m)$$ in the coordinate plane $\mathrm{rank}$-$\dg$, and connect point $\mathrm{p}(\rk(\E_i),\dg(\E_i))$ to
point $\mathrm{p}(\rk(\E_{i+1}),\dg(\E_{i+1}))$ successively by line segments. Then we get a polygon in the plane which we call the \emph{Harder-Narasimhan Polygon} of $\E$, denote by $\mathrm{HNP}(\E)$.

Let $\mathrm{p}(r,d)$ be a point in the coordinate plane of $\mathrm{rank}$-$\dg$. If $r\leq\rk(\E)$, and $d\geq(\leq)d^\prime$ for some point $\mathrm{p}(r,d^\prime)\in\mathrm{HNP}(\E)$, then we say $\mathrm{p}(r,d)$ \emph{lies on $($below$)$} the $\mathrm{HNP}(\E)$. In this sense, there is a natural partial order structure, denote by "$\succcurlyeq$", on the set $\{\mathrm{HNP}(\E)~|~\E\in\mathfrak{Coh}(X)~\}$.

\begin{Lemma}[S. S. Shatz \cite{Shatz77} Theorem 3]\label{Shatz77}
Let $k$ be an algebraically closed field, $S$ a scheme of finite type over $k$, $\mathcal{E}$ a flat family of torsion free sheaves on $S\times_kX$ parameterized by $S$.
Then for any convex polygon $\mathscr{P}$, subset $S_{\mathscr{P}}=\{s\in S|\mathrm{HNP}(\mathcal{E}|_s)\succcurlyeq\mathscr{P}\}$ is a closed subset of $S$.
\end{Lemma}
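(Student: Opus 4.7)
The plan is to pass to the Legendre--dual description of the Harder--Narasimhan polygon, reducing the pointwise polygon comparison to finitely many ``shifted max-slope'' inequalities, and then to show each such inequality defines a closed locus via projectivity of the relative Quot scheme together with Grothendieck's boundedness lemma.

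For each $\lambda\in\R$ I would set
$$\nu_\lambda(\mathcal{E}|_s):=\sup\bigl\{\dg(\F)-\lambda\cdot\rk(\F)\mid\F\subset\mathcal{E}|_s\bigr\},\qquad\nu_\lambda(\mathscr{P}):=\max_{(r,d)\in\mathscr{P}}(d-\lambda r).$$
Because $\mathrm{HNP}(\mathcal{E}|_s)$ is the concave upper envelope of $(\rk,\dg)$-points of subsheaves, $\nu_\lambda(\mathcal{E}|_s)$ is the Legendre transform of $\mathrm{HNP}(\mathcal{E}|_s)$, and Legendre duality for concave piecewise linear functions with matching endpoints gives
$$\mathrm{HNP}(\mathcal{E}|_s)\succcurlyeq\mathscr{P}\quad\Longleftrightarrow\quad\nu_\lambda(\mathcal{E}|_s)\geq\nu_\lambda(\mathscr{P})\text{ for every }\lambda\in\R.$$
By piecewise linearity it suffices to test this at the finite set $\Lambda$ of slopes of $\mathscr{P}$. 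Hence $S_\mathscr{P}=\bigcap_{\lambda\in\Lambda}T_\lambda$ with $T_\lambda:=\{s\in S\mid\nu_\lambda(\mathcal{E}|_s)\geq\nu_\lambda(\mathscr{P})\}$, and it suffices to show each $T_\lambda$ is closed.

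Next, for fixed $\lambda\in\Lambda$ and $c:=\nu_\lambda(\mathscr{P})$, the condition defining $T_\lambda$ asserts that $\mathcal{E}|_s$ admits a subsheaf $\F_s$ with $\dg(\F_s)\geq\lambda\,\rk(\F_s)+c$, a finite disjunction over $r'\in\{0,1,\ldots,\rk(\mathcal{E})\}$. Passing to the quotient in $0\to\F_s\to\mathcal{E}|_s\to\mathcal{Q}_s\to 0$, this becomes the existence of a coherent quotient of rank $\rk(\mathcal{E})-r'$ and degree $\leq\dg(\mathcal{E}|_s)-\lambda r'-c$. Flatness of $\mathcal{E}$ makes the degree upper bound locally constant on $S$, so uniform on each connected component. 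For each Hilbert polynomial $P$ compatible with these constraints, the relative Quot scheme $\mathfrak{Quot}_{\mathcal{E}/S\times_k X/S}^{P}$ is projective over $S$ by Grothendieck's construction, hence its image in $S$ under the structural morphism is closed; $T_\lambda$ is the union of these closed images over all admissible $P$.

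The main obstacle is boundedness: a priori infinitely many Hilbert polynomials are compatible with the rank and degree constraints, since the higher Chern components of $\mathcal{Q}_s$ are unrestricted. To overcome this I would invoke Grothendieck's boundedness lemma (cf.\ Huybrechts--Lehn, Lemma~1.7.9), which says that the family of coherent quotients of a fixed coherent sheaf with bounded rank and slope bounded above is bounded, and so involves only finitely many Hilbert polynomials. Applied fiberwise with bounds that are uniform in $s$ by flatness, this makes $T_\lambda$ a finite union of closed images of relative Quot schemes, hence closed; intersecting over $\lambda\in\Lambda$ yields the closedness of $S_\mathscr{P}$.
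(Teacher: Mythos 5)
The paper does not actually prove this lemma; it is quoted verbatim from Shatz, whose argument runs through the finiteness of the set of Harder--Narasimhan polygons occurring in a bounded family together with the fact that the polygon rises under specialization. So your proposal is a genuinely independent attempt, and its second half (expressing a ``there exists a destabilizing subsheaf'' locus as a finite union of images of projective relative Quot schemes, with finiteness supplied by Grothendieck's boundedness lemma) is the standard and correct mechanism for proving semicontinuity of $\mu_{\max}$-type invariants. The problem is the combinatorial reduction that precedes it.

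The step ``by piecewise linearity it suffices to test $\nu_\lambda(\mathcal{E}|_s)\geq\nu_\lambda(\mathscr{P})$ at the finite set $\Lambda$ of slopes of $\mathscr{P}$'' is false. The function $\lambda\mapsto\nu_\lambda(\mathcal{E}|_s)$ is convex with breakpoints at the slopes of $\mathrm{HNP}(\mathcal{E}|_s)$ (which vary with $s$), not of $\mathscr{P}$; a convex function can dominate a piecewise linear one at the latter's breakpoints while dipping below it in between. Concretely, take $\mathscr{P}$ with vertices $(0,0)$, $(1,1)$, $(2,0)$, so $\Lambda=\{1,-1\}$, and let $\mathcal{E}|_s$ be semistable of rank $2$ and degree $0$, so $\mathrm{HNP}(\mathcal{E}|_s)$ is the segment from $(0,0)$ to $(2,0)$. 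Then $\nu_{1}(\mathcal{E}|_s)=0=\nu_{1}(\mathscr{P})$ and $\nu_{-1}(\mathcal{E}|_s)=2=\nu_{-1}(\mathscr{P})$, so $s\in T_{1}\cap T_{-1}$, yet $\mathrm{HNP}(\mathcal{E}|_s)\not\succcurlyeq\mathscr{P}$ (the vertex $(1,1)$ lies strictly above the segment), as detected only at $\lambda=0\notin\Lambda$. Hence $\bigcap_{\lambda\in\Lambda}T_\lambda$ is in general a strictly larger set than $S_{\mathscr{P}}$, and your argument only shows that this larger set is closed. Note also that the obvious primal-side repair --- testing $\mathrm{HNP}(\mathcal{E}|_s)(r_j)\geq d_j$ at each vertex $(r_j,d_j)$ of $\mathscr{P}$, which \emph{is} a correct finite reduction --- does not drop straight into your Quot-scheme step, because $\mathrm{HNP}(\mathcal{E}|_s)(r_j)$ need not be realized by a subsheaf of rank exactly $r_j$ (e.g.\ a stable bundle of rank $2$ and odd degree); one must instead work with nested pairs of subsheaves whose chord passes over $(r_j,d_j)$, or follow Shatz and first establish that only finitely many polygons occur in the family and that the polygon can only rise under specialization.
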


Let $\E$ a torsion free sheaf on $X$. Then there exists an open subset $U\subseteq X$, such that $\mathrm{codim}_X(X-U)\geq 2$ and $\E|_U$ is locally free.
Let
$$\widehat{\mathrm{T}^l(\E)}_U:={i_U}_*(\mathrm{T}^l(\E|_U))$$
where $i_U:U\rightarrow X$ is the natural open immersion. Then $\widehat{\mathrm{T}^l(\E)}_U$ is a reflective torsion free sheaf such that $\mu(\widehat{\mathrm{T}^l(\E)}_U)$ and $\mathrm{I}(\widehat{\mathrm{T}^l(\E)}_U)$ are independent of the choice of $U$. Without loss of generality, for any torsion free sheaf $\E$, we denote
$$\mu(\mathrm{T}^l(\E)):=\mu(\widehat{\mathrm{T}^l(\E)}_U),~ \mathrm{I}(\mathrm{T}^l(\E)):=\mathrm{I}(\widehat{\mathrm{T}^l(\E)}_U),$$
for any open subset $U\subseteq X$ such that $\mathrm{codim}_X(X-U)\geq 2$ and $\E|_U$ is locally free.

\begin{Lemma}\label{Lem:DIFlat}
Let $R$ be a principal ideal domain, $S=\mathrm{Spec}(R)$, and $X$ an integral scheme over $S$, $U$ an open subscheme of $X$. Let $\E$ be a torsion free sheaf on $U$ which is flat over $S$ such that ${i_U}_*(\E)$ is a coherent sheaf on $X$, where $i_U:U\rightarrow X$ is the natural open immersion. Then ${i_U}_*(\E)$ is flat over $S$.
\end{Lemma}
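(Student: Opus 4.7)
The plan is to reduce flatness over the principal ideal domain $S = \mathrm{Spec}(R)$ to an elementary torsion-freeness check on sections. Recall two standard facts: over a PID, a module is flat if and only if it is torsion free; and a quasi-coherent sheaf $\mathcal{F}$ on a scheme $Y$ over $S = \mathrm{Spec}(R)$ is flat over $S$ if and only if $\Gamma(W, \mathcal{F})$ is a flat $R$-module for every affine open $W \subseteq Y$ (equivalently, for every $W$ in some affine open cover).

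Applying this criterion to the quasi-coherent sheaf $(i_U)_*\E$ on $X$, it suffices to show that for every affine open $V \subseteq X$, the $R$-module
$$\Gamma(V, (i_U)_*\E) = \Gamma(V \cap U, \E)$$
is torsion free. So I would fix $0 \neq r \in R$ together with a section $s \in \Gamma(V \cap U, \E)$ satisfying $rs = 0$, and show $s = 0$. For this, cover $V \cap U$ by affine opens $\{W_i\}$ of $X$ with $W_i \subseteq U$. Since $\E$ is flat over $S$, each $\Gamma(W_i, \E)$ is a flat, hence $R$-torsion-free, $R$-module; the identity $r \cdot s|_{W_i} = 0$ therefore forces $s|_{W_i} = 0$ for every $i$. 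The sheaf axiom for $\E$ on $V \cap U$ then yields $s = 0$, as desired.

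The argument is essentially formal; the only substantive ingredients are the characterization of flatness over a PID as torsion-freeness and the translation between the sheaf-theoretic notion of $S$-flatness and flatness of sections over affine opens. The hypothesis that ${i_U}_*(\E)$ is a coherent sheaf on $X$ is not logically needed in this flatness check (it plays a role elsewhere in the paper, e.g.\ for applying Lemma \ref{Shatz77}). I do not anticipate any serious obstacle, since no cohomological vanishing or depth argument is required: the vanishing of $s$ is checked pointwise on an affine cover of $V \cap U$, using only that sections of $\E$ on affines in $U$ are flat $R$-modules by hypothesis.
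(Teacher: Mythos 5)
Your proof is correct and follows essentially the same route as the paper: reduce flatness over the PID $R$ to torsion-freeness of the $R$-modules of sections $({i_U}_*\E)(V)=\E(V\cap U)$ over affine opens $V\subseteq X$, and verify this by restricting to affine opens contained in $U$, where flatness of $\E$ over $S$ gives torsion-freeness. The only (harmless) difference is that you argue contrapositively ($rs=0$ forces $s|_{W_i}=0$ on a cover, then the sheaf axiom gives $s=0$), whereas the paper argues directly that a nonzero section remains nonzero on a single $W\subseteq U\cap V$ using that $X$ is integral and $\E$ is torsion free; your variant in fact does not need those two hypotheses.
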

\begin{proof} Since $R$ is a principal ideal domain, to prove ${i_U}_*(\E)$ is flat over $S$, it is enough to show that $M:=({i_U}_*(\E))(V)$ is a torsion free $R$-module for any affine open subset $V\subseteq X$. Let $0\neq r\in R$, $0\neq m\in M$. As $X$ is an integral scheme and $\E$ is torsion free over $U$, then $m|_W\neq 0$ for any affine open sub-set $W\subseteq U\cap V$. On the other hand, $r\cdot(m|_W)\neq 0$ for $\E$ is flat over $S$, therefore $rm\neq 0$. Thus $M$ is a torsion free $R$-module.
\end{proof}

\begin{Proposition}\label{Tl1}
Let $\E$ be a torsion free sheaf on $X$, $0=\E_0\subset\E_1\subset\cdots\subset\E_{m-1}\subset\E_m=\E$ a filtration of $\E$ such that $\E_i/\E_{i-1}(1\leq i\leq m)$ are torsion free sheaves.
Then for any integer $0\leq l\leq\rk(\E)(p-1)$, we have
$$\mathrm{I}(\mathrm{T}^l(\E))\leq\mathrm{I}(\mathrm{T}^l(\bigoplus_{i=1}^{m}\E_i/\E_{i-1})).$$
\end{Proposition}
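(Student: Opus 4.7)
The plan is to degenerate $\E$ to its associated graded sheaf $\bigoplus_{i=1}^m\E_i/\E_{i-1}$ via a Rees-type construction over $\mathbb{A}^1_k$, apply the truncated symmetric power fiberwise, and invoke Shatz's semicontinuity theorem (Lemma~\ref{Shatz77}) together with the monotonicity of $\mathrm{I}$ under dominance of Harder--Narasimhan polygons having the same endpoints.

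First I would construct a coherent sheaf $\widetilde{\E}$ on $X\times_k\mathbb{A}^1_k$ which is flat over $\mathbb{A}^1_k$, whose fiber at each closed point $c\ne 0$ is isomorphic to $\E$, and whose fiber at $c=0$ is isomorphic to $\bigoplus_{i=1}^m\E_i/\E_{i-1}$. This is the standard Rees sheaf of the given filtration: if $\pi:X\times_k\mathbb{A}^1_k\to X$ denotes the projection, one takes $\widetilde{\E}=\sum_{i\ge 0}t^{i}\pi^*\E_i\subset\pi^*\E[t]$ (setting $\E_i=\E$ for $i\ge m$), viewed as a graded $\mathscr{O}_X[t]$-submodule. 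Flatness is immediate from $k[t]$-torsion-freeness, and the stated fibers follow by a direct computation, using that each $\E_i/\E_{i-1}$ is torsion-free.

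Next I would form the truncated symmetric power of this family. Let $\widetilde{U}\subseteq X\times\mathbb{A}^1_k$ be the maximal open subset on which $\widetilde{\E}$ is locally free; its complement meets each fiber over $\mathbb{A}^1_k$ in a subset of codimension $\ge 2$. Put $\mathscr{T}:=(i_{\widetilde{U}})_*\mathrm{T}^l(\widetilde{\E}|_{\widetilde{U}})$. As the reflexive hull of a coherent torsion-free sheaf, $\mathscr{T}$ is coherent on $X\times\mathbb{A}^1_k$, and Lemma~\ref{Lem:DIFlat} applied with $R=k[t]$ ensures that $\mathscr{T}$ is flat over $\mathbb{A}^1_k$. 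By construction, for each $c\in\mathbb{A}^1_k(k)$ the fiber $\mathscr{T}|_{t=c}$ agrees with $\widehat{\mathrm{T}^l(\widetilde{\E}|_{t=c})}$ off a codimension-$\ge 2$ subset of $X$, so the two have the same rank, degree, and Harder--Narasimhan polygon.

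Applying Lemma~\ref{Shatz77} to the flat family $\mathscr{T}$ then yields
$$\mathrm{HNP}\bigl(\mathrm{T}^l(\bigoplus_{i=1}^m\E_i/\E_{i-1})\bigr)\;\succcurlyeq\;\mathrm{HNP}(\mathrm{T}^l(\E)).$$
Since both polygons share the endpoints $(0,0)$ and $(\rk\mathrm{T}^l(\E),\dg\mathrm{T}^l(\E))$, dominance forces $\mu_{\max}$ on the left to be $\ge\mu_{\max}$ on the right and $\mu_{\min}$ on the left to be $\le\mu_{\min}$ on the right; subtracting these gives the claimed inequality. The main obstacle is the second step: because $\widetilde{\E}$ is only torsion-free, the truncated symmetric power must be formed on the locally-free locus and then extended by pushforward, and Lemma~\ref{Lem:DIFlat} is precisely the technical ingredient that keeps this extension flat over $\mathbb{A}^1_k$ so that Shatz's theorem can be invoked.
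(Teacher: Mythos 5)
Your argument is correct and follows essentially the same route as the paper's: a flat degeneration of $\E$ to its associated graded over $\mathbb{A}^1_k$, formation of $\mathrm{T}^l$ on the locally free locus followed by pushforward (with Lemma~\ref{Lem:DIFlat} supplying flatness), and Shatz's semicontinuity of Harder--Narasimhan polygons. The only difference is cosmetic: the paper reduces to the case $m=2$ by induction and degenerates one step of the filtration at a time, whereas you degenerate the whole filtration at once with a single Rees construction.
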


\begin{proof} We use induction on $m$, so we only have to prove the case $m=2$. In fact, for any $1\leq i\leq m$, consider the exact sequence\\
\centerline{$0\longrightarrow \E_{i-1}\bigoplus(\bigoplus\limits_{j=i+1}^m\E_j/\E_{j-1})\longrightarrow \E_i\bigoplus(\bigoplus\limits_{j=i+1}^m\E_j/\E_{j-1}) \longrightarrow \E_i/\E_{i-1} \longrightarrow 0,$}
we have $\mathrm{I}(\mathrm{T}^l(\E))\leq\mathrm{I}(\mathrm{T}^l(\E_i\bigoplus(\bigoplus\limits_{j=i+1}^m\E_j/\E_{j-1})))
\leq\mathrm{I}(\mathrm{T}^l(\bigoplus\limits_{i=1}^{m}\E_i/\E_{i-1}))$.

In general, for the exact sequence of torsion free sheaves\\
\centerline{$0\longrightarrow\E^\prime\longrightarrow\E\longrightarrow \E^{\prime\prime}\longrightarrow0,$}
we can construct a coherent sheaf $\mathcal{E}$ on $\mathbb{A}^1_k\times X$ which is flat over $\mathbb{A}^1_k$, such that $\mathcal{E}|_{t\times X}\cong\E$ for any $t\in\mathbb{A}^1_k-\{0\}$, and $\mathcal{E}|_{\{0\}\times X}\cong\E^\prime\bigoplus\E^{\prime\prime}$.
Choose an open subset $U\subseteq X$ with $\mathrm{codim}_X(X-U)\geq 2$ such that $\E|_U$, $\E^\prime|_U$ and $\E^{\prime\prime}|_U$ are locally free. Then for any $t\in\mathbb{A}^1_k$, $\mathcal{E}|_{\{t\}\times U}$ is locally free on $U$. Hence, by \cite[Theorem 1.27]{Simpson94} we have $\mathcal{E}|_{\mathbb{A}^1_k\times U}$ is locally free on $\mathbb{A}^1_k\times U$.

By Lemma \ref{RepT} we can construct an exact sequence of sheaves on $Y:=\mathbb{A}^1_k\times U$\\
\centerline{$0\rightarrow \mathrm{Sym}^{l-l(p)\cdot p}(\mathcal{E}|_Y)\otimes_{\mathscr{O}_Y}\bigwedge\limits^{l(p)}F^*_Y(\mathcal{E}|_Y)\stackrel{\phi_{l(p)}}{\rightarrow}
    \mathrm{Sym}^{l-(l(p)-1)\cdot p}(\mathcal{E}|_Y)\otimes_{\mathscr{O}_Y}\bigwedge\limits^{l(p)-1}F^*_Y(\mathcal{E}|_Y)\rightarrow$}
\centerline{$\cdots\rightarrow \mathrm{Sym}^{l-q\cdot p}(\mathcal{E}|_Y)\otimes_{\mathscr{O}_Y}\bigwedge\limits^{q}F^*_Y(\mathcal{E}|_Y)\stackrel{\phi_{q}}{\rightarrow}\mathrm{Sym}^{l-(q-1)\cdot p}(\mathcal{E}|_Y)\otimes_{\mathscr{O}_Y}\bigwedge\limits^{q-1}F^*_Y(\mathcal{E}|_Y)\rightarrow\cdots$}
\centerline{$\rightarrow \mathrm{Sym}^{l-p}(\mathcal{E}|_Y)\otimes_{\mathscr{O}_Y}F^*_Y({\mathcal{E}|_Y})\stackrel{\phi_{1}}{\rightarrow} \mathrm{Sym}^l(\mathcal{E}|_Y)\stackrel{\phi_0}{\rightarrow}\mathrm{T}^l(\mathcal{E}|_Y)\rightarrow 0.$}

Let $$\widehat{\mathrm{T}^l(\mathcal{E})}:={i_{\mathbb{A}^1_k\times U}}_*(\mathrm{T}^l(\mathcal{E}|_{\mathbb{A}^1_k\times U}))$$
where $i_{\mathbb{A}^1_k\times U}:\mathbb{A}^1_k\times U\rightarrow\mathbb{A}^1_k\times X$ is the natural open immersion.
Since $\mathbb{A}^1_k\times X$ is a smooth variety and $\mathrm{codim}_{\mathbb{A}^1_k\times X}(\mathbb{A}^1_k\times X-\mathbb{A}^1_k\times U)\geq 2$, $\mathrm{T}^l(\mathcal{E}|_{\mathbb{A}^1_k\times U})~\text{is locally free}$, by \cite[Proposition 5.10]{Grothendieck67} and Lemma \ref{Lem:DIFlat}, we have $\widehat{\mathrm{T}^l(\mathcal{E})}$ is a coherent sheaf on $\mathbb{A}^1_k\times X$ which is flat over $\mathbb{A}^1_k$ such that $\widehat{\mathrm{T}^l(\mathcal{E})}_t|_U\cong\mathrm{T}^l(\E|_U)$ for any $t\in\mathbb{A}^1_k-\{0\}$ and $\widehat{\mathrm{T}^l(\mathcal{E})}_0|_U\cong\mathrm{T}^l((\E^\prime\bigoplus\E^{\prime\prime})|_U)$.
Then by Lemma \ref{Shatz77}, we have
$$\mathrm{I}(\mathrm{T}^l(\E))=\mathrm{I}(\widehat{\mathrm{T}^l(\mathcal{E})}_t)\leq\mathrm{I}(\widehat{\mathrm{T}^l(\mathcal{E})}_0)=\mathrm{I}(\mathrm{T}^l(\E^\prime\oplus\E^{\prime\prime})).$$
for any $t\in\mathbb{A}^1_k-\{0\}$. This completes the proof.
\end{proof}

The following theorem shows that when $\E$ is a direct sum of slope strongly semi-stable sheaves, we can bound $\mathrm{I}(\mathrm{T}^l(\E))$ in terms of $\rk(\E)$ and $\mathrm{I}(\E)$.

\begin{Theorem}\label{Tl2}
Let $\E=\E_1\oplus\cdots\oplus\E_m$ be a torsion free sheaf such that $\E_i~(1\leq i\leq m)$ are slope strongly semi-stable. Then for any integer $0\leq l\leq\rk(\E)(p-1)$, we have
$$\mathrm{I}(\mathrm{T}^l(\E))\leq\mathrm{min}\{~l,~[\frac{\rk(\E)}{2}](p-1)~\}\cdot\mathrm{I}(\E).$$
\end{Theorem}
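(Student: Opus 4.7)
The plan is to decompose $\mathrm{T}^l(\E)$ as a direct sum of slope strongly semistable pieces indexed by compositions of $l$, from which the instability reduces to a purely combinatorial estimate. Choose an open $U \subseteq X$ with $\codim_X(X\setminus U) \geq 2$ on which each $\E_i|_U$ is locally free. The key algebraic input is the identification $\mathrm{T}^\bullet(V) = \mathrm{Sym}^\bullet(V)/(v^p : v \in V)$ in characteristic $p$, from which
\[
\mathrm{T}^\bullet(V_1 \oplus \cdots \oplus V_m) \cong \mathrm{T}^\bullet(V_1) \otimes_k \cdots \otimes_k \mathrm{T}^\bullet(V_m)
\]
as graded $\GL(V_1) \times \cdots \times \GL(V_m)$-modules. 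Globalising via the frame bundles over $U$ and extending by reflexive hull produces
\[
\mathrm{T}^l(\E) \cong \bigoplus_{\substack{k_1+\cdots+k_m = l \\ 0 \leq k_i \leq \rk(\E_i)(p-1)}} \mathrm{T}^{k_1}(\E_1) \otimes \cdots \otimes \mathrm{T}^{k_m}(\E_m).
\]

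Next I would show each summand is slope strongly semistable of slope $\sum_i k_i \mu(\E_i)$. The $\GL_{\rk(\E_i)}$-equivariant inclusion $\mathrm{T}^{k_i}(V) \hookrightarrow V^{\otimes k_i}$ globalises to $\mathrm{T}^{k_i}(\E_i) \hookrightarrow \E_i^{\otimes k_i}$, both of slope $k_i\mu(\E_i)$. Since $\E_i$ is slope strongly semistable, every Frobenius pullback $(F_X^{m*}\E_i)^{\otimes k_i}$ is slope semistable (tensor products of slope strongly semistable sheaves are slope strongly semistable in positive characteristic, cf.\ \cite{Langer05}), and a subsheaf of the same slope of a semistable sheaf is itself semistable; hence $\mathrm{T}^{k_i}(\E_i)$ is slope strongly semistable. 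The tensor product $\bigotimes_i \mathrm{T}^{k_i}(\E_i)$ is then slope strongly semistable of slope $\sum_i k_i \mu(\E_i)$, and $\mathrm{I}(\mathrm{T}^l(\E))$ equals the range $\mu_{\max}-\mu_{\min}$ of $(k_i) \mapsto \sum_i k_i \mu(\E_i)$ over admissible compositions.

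It remains to bound this range. Order so that $\mu(\E_1) \geq \cdots \geq \mu(\E_m)$, let $(k_i^+), (k_i^-)$ be maximising and minimising configurations, and set $d_i = k_i^+ - k_i^-$, $P = \{i : d_i > 0\}$, $Q = \{i : d_i < 0\}$, $S = \sum_{i \in P} d_i = -\sum_{i \in Q} d_i$. Using $\mu(\E_i) \leq \mu(\E_1)$ on $P$ and $\mu(\E_i) \geq \mu(\E_m)$ on $Q$,
\[
\mu_{\max} - \mu_{\min} = \sum_i d_i\, \mu(\E_i) \leq S\bigl(\mu(\E_1)-\mu(\E_m)\bigr) = S\cdot \mathrm{I}(\E).
\]
Trivially $S \leq \sum_i k_i^+ = l$. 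Since $P\cap Q = \emptyset$ and $k_i^\pm \leq \rk(\E_i)(p-1)$,
\[
S \leq (p-1)\min\!\Bigl(\sum_{i\in P}\rk(\E_i),\ \sum_{i\in Q}\rk(\E_i)\Bigr) \leq (p-1)\cdot[\rk(\E)/2],
\]
and combining the two bounds yields $\min\{l, [\rk(\E)/2](p-1)\}\cdot\mathrm{I}(\E)$.

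The step demanding the most care is the sheaf-theoretic globalisation of the algebra decomposition: one must verify that the $\GL$-equivariant isomorphism of $k$-representations yields a genuine direct-sum decomposition of the associated reflexive sheaves on all of $X$, so that the instability of $\mathrm{T}^l(\E)$ is indeed computed by the combinatorial optimisation above rather than only on the locally free locus $U$.
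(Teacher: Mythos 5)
Your proof is correct, and it shares the paper's overall strategy --- decompose $\mathrm{T}^l(\E)$ into slope-homogeneous, slope strongly semi-stable direct summands indexed by compositions $(k_1,\dots,k_m)$ of $l$ with $0\leq k_i\leq \rk(\E_i)(p-1)$, so that $\mathrm{I}(\mathrm{T}^l(\E))$ becomes the range of $(k_i)\mapsto\sum_i k_i\mu(\E_i)$ --- but both stages are implemented differently. The paper gets the decomposition by splitting the resolution of Lemma \ref{RepT} into pieces of fixed slope and checking that the maps $\phi_q$ preserve them; you instead use the presentation $\mathrm{T}^\bullet(V)\cong\mathrm{Sym}^\bullet(V)/(v^p:v\in V)$, which is exactly what the tail $\mathrm{Sym}^{l-p}(V)\otimes F_k^*V\stackrel{\phi_1}{\to}\mathrm{Sym}^l(V)\to\mathrm{T}^l(V)\to 0$ of Lemma \ref{RepT} encodes (the image of $\phi_1$ is the ideal generated by $p$-th powers, since $(\sum a_ie_i)^p=\sum a_i^pe_i^p$ in characteristic $p$), and the resulting K\"unneth-type splitting $\mathrm{T}^\bullet(\bigoplus_iV_i)\cong\bigotimes_i\mathrm{T}^\bullet(V_i)$ refines the paper's slope decomposition into genuine tensor-product summands. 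Your argument that each $\mathrm{T}^{k_i}(\E_i)$ is slope strongly semi-stable (an equal-slope subsheaf of $\E_i^{\otimes k_i}$) is in substance the paper's Proposition \ref{SemiStabT^l}, proved independently here, so there is no circularity. For the optimization, the paper writes down the explicit greedy extremizers $(p-1,\dots,p-1,l-l(p)\cdot p,0,\dots,0)$ and runs a four-way case analysis, whereas your two-sided estimate $S=\sum_{i\in P}d_i=-\sum_{i\in Q}d_i\leq\min\{l,(p-1)[\rk(\E)/2]\}$ reaches the same bound more cleanly and without cases. Finally, the globalisation issue you flag at the end is already absorbed by the paper's conventions: $\mathrm{I}(\mathrm{T}^l(\E))$ is \emph{defined} as the instability of ${i_U}_*(\mathrm{T}^l(\E|_U))$ for $U$ with $\codim_X(X-U)\geq 2$, and the Harder--Narasimhan data of such a pushforward is determined by its restriction to $U$, so it suffices to produce the direct-sum decomposition on the locally free locus.
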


\begin{proof} Without loss of generality, we can assume $\E_i(1\leq i\leq m)$ are locally free such that $\mu(\E_1)>\mu(\E_2)>\cdots>\mu(\E_m)$. Let $r=\rk(\E)$, $r_i=\rk(\E_i)(1\leq i\leq m)$, $l(p)$ be the unique integer such that $0\leq l-l(p)\cdot p<p$.

Since tensor products, exterior and symmetric products of slope strongly semi-stable sheaves are still slope strongly semi-stable, the direct summand\\
\centerline{$\E^{a_1,\cdots, a_m}_{b_1,\cdots, b_m}:=\mathrm{Sym}^{a_1}\E_1\otimes\cdots\otimes\mathrm{Sym}^{a_m}\E_m\otimes\bigwedge\limits^{b_1}F_X^*\E_1\otimes\cdots\otimes\bigwedge\limits^{b_m}F_X^*\E_m $}
of $\mathrm{Sym}^{a}\E\otimes_{\mathscr{O}_X}\bigwedge\limits^{b}F_X^*\E$ is slope strongly semi-stable for any non-negative partition $(a_1,\cdots, a_m)$, $(b_1,\cdots, b_m)$ of non-negative integers $a$ and $b$.
By direct computation, if $\E^{a_1,\cdots, a_m}_{b_1,\cdots, b_m}\neq\emptyset$, then
$\mu(\E^{a_1,\cdots, a_m}_{b_1,\cdots, b_m})=(a_1+pb_1)\mu(\E_1)+\cdots+(a_m+pb_m)\mu(\E_m)$.

By Lemma \ref{RepT}, we obtain the exact sequence of locally free sheaves\\
\centerline{$0\rightarrow \mathrm{Sym}^{l-l(p)\cdot p}(\E)\otimes_{\mathscr{O}_X}\bigwedge\limits^{l(p)}F^*_X(\E)\stackrel{\phi_{l(p)}}{\rightarrow}
    \mathrm{Sym}^{l-(l(p)-1)\cdot p}(\E)\otimes_{\mathscr{O}_X}\bigwedge\limits^{l(p)-1}F^*_X(\E)\rightarrow$}
\centerline{$\cdots\rightarrow \mathrm{Sym}^{l-q\cdot p}(\E)\otimes_{\mathscr{O}_X}\bigwedge\limits^{q}F^*_X(\E)\stackrel{\phi_{q}}{\rightarrow}\mathrm{Sym}^{l-(q-1)\cdot p}(\E)\otimes_{\mathscr{O}_X}\bigwedge\limits^{q-1}F^*_X(\E)\rightarrow\cdots$}
\begin{equation}\label{ExactSeq}
\rightarrow \mathrm{Sym}^{l-p}(\E)\otimes_{\mathscr{O}_X}F^*_X({\E})\stackrel{\phi_{1}}{\rightarrow} \mathrm{Sym}^l(\E)\stackrel{\phi_0}{\rightarrow}\mathrm{T}^l(\E)\rightarrow 0.
\end{equation}

By the definition of $\phi_q$ in section 2 (See (\ref{Phiq})), we have
$$\phi_q(\E^{a_1,\cdots, a_m}_{b_1,\cdots, b_m})\subseteq\bigoplus\limits_{i=1}^m\E^{a_1,\cdots,a_i+p, \cdots a_m}_{b_1,\cdots, b_i-1,\cdots, b_m}.$$
where $l=\sum\limits_{i=1}^m(a_i+pb_i)$. As $\mu(\E^{a_1,\cdots,a_i+p,\cdots, a_m}_{b_1,\cdots, b_i-1,\cdots, b_m})=\bigoplus\limits_{i=1}^m(a_i+pb_i)\mu(\E_i)$ for any $1\leq i\leq m$.
Thus $\mu(\E^{a_1,\cdots, a_m}_{b_1,\cdots, b_m})=\mu(\bigoplus\limits_{i=1}^m\E^{a_1,\cdots,a_i+p,\cdots a_m}_{b_1,\cdots,b_i-1,\cdots, b_m})$. Let $(\mathrm{Sym}^{l-q\cdot p}(\E)\otimes_{\mathscr{O}_X}\bigwedge\limits^qF^*_X(\E))_{\mu}$ be the direct summand of $\mathrm{Sym}^{l-q\cdot p}(\E)\otimes_{\mathscr{O}_X}\bigwedge^{q}F^*_X(\E)$ with slope $\mu$, where $\mu=\sum\limits_{i=1}^mc_i\mu(\E_i),~l=\sum\limits_{i=1}^mc_i,~c_i\geq 0(0\leq i\leq m)$ and $0\leq q\leq l(p)$.
Then the exact sequence (\ref{ExactSeq}) can be decomposed into direct summands\\
\centerline{$0\rightarrow (\mathrm{Sym}^{l-l(p)\cdot p}(\E)\otimes_{\mathscr{O}_X}\bigwedge\limits^{l(p)}F^*_X(\E))_{\mu}\stackrel{\phi_{l(p)}}{\rightarrow}
(\mathrm{Sym}^{l-(l(p)-1)\cdot p}(\E)\otimes_{\mathscr{O}_X}\bigwedge\limits^{l(p)-1}F^*_X(\E))_{\mu}\rightarrow$}
\centerline{$\cdots\rightarrow (\mathrm{Sym}^{l-q\cdot p}(\E)\otimes_{\mathscr{O}_X}\bigwedge\limits^{q}F^*_X(\E))_{\mu}\stackrel{\phi_{q}}{\rightarrow}(\mathrm{Sym}^{l-(q-1)\cdot p}(\E)\otimes_{\mathscr{O}_X}\bigwedge\limits^{q-1}F^*_X(\E))_{\mu}\rightarrow\cdots$}
\centerline{$\rightarrow (\mathrm{Sym}^{l-p}(\E)\otimes_{\mathscr{O}_X}F^*_X({\E}))_{\mu}\stackrel{\phi_{1}}{\rightarrow} (\mathrm{Sym}^l(\E))_{\mu}\stackrel{\phi_{0}}{\rightarrow}(\mathrm{T}^l(\E))_{\mu}\rightarrow 0,$}
and $$\mathrm{T}^l(\E)=\bigoplus\limits_{\mu=\sum\limits_{i=1}^mc_i\mu(\E_i), \atop \sum\limits_{i=1}^mc_i=l,c_i\geq0(1\leq i\leq m)}(\mathrm{T}^l(\E))_{\mu}.$$

Consider the direct sum $\mathrm{Sym}^l(\E)=\bigoplus\limits_{\sum\limits_{i=1}^mk_i=l,\atop k_i\geq 0,1\leq i\leq m}\bigotimes\limits_{i=1}^m \mathrm{Sym}^{k_i}(\E_i)$, we have $$\phi_0(\bigotimes\limits_{i=1}^m \mathrm{Sym}^{k_i}(\E_i))\neq 0~\text{if and only if}~0\leq k_i<r_i(p-1)(1\leq i\leq m).$$
Let $(d_1,\cdots,d_{l(p)},\cdots,d_r):=(p-1,\cdots,p-1,l-l(p)\cdot p,\cdots,0)$. Then
$$\mu_{\max}(\mathrm{T}^l(\E))=\sum^{r}_{i=1} d_i\mu(\E_i),~\mu_{\mathrm{min}}(\mathrm{T}^l(\E))=\sum^{r}_{i=1} d_{r-i+1}\mu(\E_i).$$
\begin{eqnarray*}\mathrm{I}(\mathrm{T}^l(\E))&=&\sum^{r}_{i=1}(d_i-d_{r-i+1})\mu(\E_i)=\sum^{[\frac{r}{2}]}_{i=1}(d_i-d_{r-i+1})(\mu(\E_i)-\mu(\E_{r-i+1}))\\
&\leq&\sum^{[\frac{r}{2}]}_{i=1}(d_i-d_{r-i+1})(\mu_{\max}(\E)-\mu_{\mathrm{min}}(\E)).
\end{eqnarray*}
\[=\left\{
\begin{array}{lll}
l\cdot\mathrm{I}(\E) & \mathrm{if}~0\leq l\leq [\frac{r}{2}](p-1)\\
(r(p-1)-l)\cdot\mathrm{I}(\E) & \mathrm{if}~r|2, \frac{r}{2}(p-1)<l<r(p-1)\\
(p-1)\cdot[\frac{r}{2}]\cdot\mathrm{I}(\E) & \mathrm{if}~r\nmid 2, [\frac{r}{2}](p-1)<l\leq([\frac{r}{2}]+1)(p-1)\\
(r(p-1)-l)\cdot\mathrm{I}(\E) & \mathrm{if}~r\nmid 2, ([\frac{r}{2}]+1)(p-1)<l<r(p-1)\\
\end{array}
\right.
\]

Hence, $\mathrm{I}(\mathrm{T}^l(\E))\leq\mathrm{min}\{~l,~[\frac{\rk(\E)}{2}](p-1)~\}\cdot\mathrm{I}(\E)$, for any $0\leq l\leq\rk(\E)(p-1)$.
\end{proof}

\begin{Theorem}\label{InstabTl}
Let $\E$ be a torsion free sheaf on $X$. Then for any integer $0\leq l\leq\rk(\E)(p-1)$, we have
$$\mathrm{I}(\mathrm{T}^l(\E))\leq\mathrm{min}\{~l,~[\frac{\rk(\E)}{2}](p-1)~\}\cdot(\frac{\rk(\E)-1}{p}
\cdot\max\{0,~L_{\max}(\Omg^1_X)\}+\mathrm{I}(\E)).$$
\end{Theorem}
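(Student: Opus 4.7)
The plan is to reduce to the case of a direct sum of slope strongly semi-stable sheaves via iterated Frobenius pullback, apply Proposition \ref{Tl1} and Theorem \ref{Tl2}, and then descend back to $\E$ using the behaviour of slopes under $F^*_X$ together with Langer's inequality (\ref{Lminmax}). By A. Langer \cite[Theorem 2.7]{Langer04i} there exists $m_0\geq 0$ such that for every $m\geq m_0$ the Harder-Narasimhan quotients $\mathcal{G}_i$ of $F^{m*}_X(\E)$ are slope strongly semi-stable; fix such an $m$ and set $r=\rk(\E)$.

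Applying Proposition \ref{Tl1} to the Harder-Narasimhan filtration of $F^{m*}_X(\E)$ and then Theorem \ref{Tl2} to $\bigoplus_i \mathcal{G}_i$ (using that $\rk(F^{m*}_X(\E))=r$ and $\mathrm{I}(\bigoplus_i \mathcal{G}_i)=\mathrm{I}(F^{m*}_X(\E))$) gives
$$\mathrm{I}(\mathrm{T}^l(F^{m*}_X(\E))) \leq \mathrm{min}\{l,\,[r/2](p-1)\}\cdot \mathrm{I}(F^{m*}_X(\E)).$$
Since $\mathrm{T}^l$ is the associated bundle construction for a representation of $\GL_r$, it commutes with $F^*_X$ on the locally free locus of $\E$, so by the reflexive-hull convention introduced before Lemma \ref{Lem:DIFlat} one has the equality $\mathrm{I}(\mathrm{T}^l(F^{m*}_X(\E)))=\mathrm{I}(F^{m*}_X(\mathrm{T}^l(\E)))$. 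Because $X$ is smooth, $F_X$ is finite and flat, hence $F^*_X$ is exact and $\mu(F^*_X\mathcal{F})=p\mu(\mathcal{F})$ for every torsion free $\mathcal{F}$; pulling back a saturated subsheaf (resp.\ a torsion free quotient) of $\mathrm{T}^l(\E)$ achieving $\mu_{\max}$ (resp.\ $\mu_{\mathrm{min}}$) therefore shows $p^m\cdot\mathrm{I}(\mathrm{T}^l(\E))\leq \mathrm{I}(F^{m*}_X(\mathrm{T}^l(\E)))$.

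Combining these two estimates and dividing by $p^m$ yields
$$\mathrm{I}(\mathrm{T}^l(\E)) \leq \mathrm{min}\{l,\,[r/2](p-1)\}\cdot \frac{\mathrm{I}(F^{m*}_X(\E))}{p^m}.$$
For $m$ sufficiently large the second factor stabilizes to $L_{\max}(\E)-L_{\mathrm{min}}(\E)$, and by Langer's inequality (\ref{Lminmax}) this is bounded above by $\frac{r-1}{p}\cdot\max\{0,L_{\max}(\Omg^1_X)\}+\mathrm{I}(\E)$, producing the stated bound. The main obstacle I anticipate is the clean identification $F^{m*}_X(\mathrm{T}^l(\E))\cong \mathrm{T}^l(F^{m*}_X(\E))$ when $\E$ is only torsion free rather than locally free; this is handled by restricting to an open set $U\subseteq X$ with $\codim_X(X-U)\geq 2$ on which $\E$ is locally free, applying the functorial compatibility there, and then taking the reflexive extension to $X$, which does not affect slopes or instability.
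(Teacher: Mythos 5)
Your proposal is correct and follows essentially the same route as the paper: Frobenius pullback until the Harder--Narasimhan quotients are strongly semi-stable (Langer), then Proposition \ref{Tl1} and Theorem \ref{Tl2}, the identification $\mathrm{T}^l(F^{m*}_X\E)\cong F^{m*}_X(\mathrm{T}^l(\E))$ on the big open set, the inequality $p^m\,\mathrm{I}(\mathrm{T}^l(\E))\leq \mathrm{I}(F^{m*}_X(\mathrm{T}^l(\E)))$, and finally Langer's bound (\ref{Lminmax}) on $L_{\max}(\E)-L_{\mathrm{min}}(\E)$. The only difference is cosmetic ordering of the chain of inequalities, so nothing further is needed.
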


\begin{proof} By \cite[Theorem 2.7]{Langer04i}, there exists an integer $m_0\geq 0$ such that the Harder-Narasimhan filtration $0\subset\E_1\subset\cdots\subset\E_{m-1}\subset\E_m={F_X^{m_0}}^*(\E)$ of ${F_X^{m_0}}^*(\E)$ satisfy $gr^{\mathrm{HN}}_i({F_X^{m_0}}^*(\E))=\E_i/\E_{i-1}(1\leq i\leq m)$ are slope strongly semi-stable. Thus,
\begin{eqnarray*}\mathrm{I}(\mathrm{T}^l(\E))&\leq&\frac{\mathrm{I}(F_X^{m_0*}(\mathrm{T}^l(\E))}{p^{m_0}}\\
&=&\frac{\mathrm{I}(\mathrm{T}^l(F_X^{m_0*}(\E)))}{p^{m_0}}\\
&\leq&\frac{\mathrm{I}(\mathrm{T}^l(\bigoplus\limits_{i=1}^m\E_i/\E_{i-1}))}{p^{m_0}}~~(\mathrm{cf.~Theorem}~\ref{Tl1})\\
&\leq&\mathrm{min}\{~l,~[\frac{\rk(\E)}{2}](p-1)~\}\cdot\frac{\mathrm{I}(\bigoplus\limits_{i=1}^m\E_i/\E_{i-1})}{p^{m_0}} ~~(\mathrm{cf.~Theorem}~\ref{Tl2})\\
&=&\mathrm{min}\{~l,~[\frac{\rk(\E)}{2}](p-1)~\}\cdot(L_{\max}(\E)-L_{\mathrm{min}}(\E))\\
&\leq&\mathrm{min}\{~l,~[\frac{\rk(\E)}{2}](p-1)~\}\cdot(\frac{\rk(\E)-1}{p}
\cdot\max\{0,~L_{\max}(\Omg^1_X)\}+\mathrm{I}(\E)).
\end{eqnarray*}
where the first inequality is followed by the property of Frobenius morphism, and the last inequality is just the inequality (\ref{Lminmax}).
\end{proof}

We give some sufficient conditions for slope semi-stability of truncated symmetric powers of torsion free sheaves as following.
\begin{Proposition}\label{SemiStabT^l}
Let $\E$ be a slope strongly semi-stable torsion free sheaf on $X$. Then $\mathrm{T}^l(\E)$ is slope strongly semi-stable for any integer $0\leq l\leq\rk(\E)(p-1)$.
\end{Proposition}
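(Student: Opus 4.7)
The plan is to reduce the statement to Theorem \ref{Tl2} applied at every Frobenius iterate, using that $\mathrm{T}^l$ commutes with $F_X^*$. By the very definition of slope strong semi-stability, $\E$ being slope strongly semi-stable means $F_X^{m*}(\E)$ is slope semi-stable for every $m\geq 0$; equivalently $\mathrm{I}(F_X^{m*}(\E))=0$ for all such $m$. Hence it is enough to show that $F_X^{m*}(\mathrm{T}^l(\E))$ is slope semi-stable for every $m\geq 0$.

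The key compatibility I would establish first is the identification $F_X^{m*}(\mathrm{T}^l(\E)) \cong \mathrm{T}^l(F_X^{m*}(\E))$. On the open subset $U\subseteq X$ where $\E$ is locally free (with $\codim_X(X\setminus U)\geq 2$), $\mathrm{T}^l(\E|_U)$ is, by the definition recalled at the end of Section 2, the associated bundle of the principal $\GL_n(k)$-frame bundle of $\E|_U$ through the sub-representation $\mathrm{T}^l(V)\subseteq V^{\otimes l}$. Frobenius pullback commutes with the formation of associated bundles, and since $F_X$ is the identity on the underlying topological space, $U$ is $F_X$-stable and $F_X^{m*}(\E)$ is locally free exactly on $U$. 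So the isomorphism holds on $U$ and extends to $X$ after applying the reflexive hull $\widehat{(-)}_U={i_U}_*$, which is precisely how $\mathrm{T}^l$ was defined for torsion free sheaves.

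Granted this, I would apply Theorem \ref{Tl2} to $F_X^{m*}(\E)$ viewed as a trivial one-summand decomposition: its unique summand is slope strongly semi-stable, in particular slope semi-stable, so $\mathrm{I}(F_X^{m*}(\E))=0$. The theorem then yields
$$\mathrm{I}(\mathrm{T}^l(F_X^{m*}(\E))) \leq \mathrm{min}\{\,l,\,[\tfrac{\rk(\E)}{2}](p-1)\,\}\cdot \mathrm{I}(F_X^{m*}(\E)) = 0,$$
so $\mathrm{T}^l(F_X^{m*}(\E))$ is slope semi-stable. Combined with the compatibility, $F_X^{m*}(\mathrm{T}^l(\E))$ is slope semi-stable for every $m\geq 0$, i.e.\ $\mathrm{T}^l(\E)$ is slope strongly semi-stable.

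The main potential obstacle is the compatibility step, since it is the only point where one has to go beyond the associated-bundle description used for locally free sheaves. Concretely, one must verify that $F_X^{m*}$ commutes with $\widehat{(-)}_U$ in this setting; this is formal because $F_X$ is affine (in fact the identity on the underlying space) and the reflexive extension is determined by its restriction to $U$, but given the ad-hoc conventions introduced earlier in the section, it deserves to be recorded explicitly rather than left as a routine verification.
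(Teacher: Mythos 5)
Your proof is correct, but it takes a genuinely different route from the paper's. The paper argues directly from the resolution of Lemma \ref{RepT}: for locally free $\E$, every term $\mathrm{Sym}^{l-qp}(\E)\otimes_{\mathscr{O}_X}\bigwedge^{q}F_X^*(\E)$ is slope strongly semi-stable (tensor, symmetric and exterior products of strongly semi-stable sheaves are strongly semi-stable), a direct computation shows all these terms have slope equal to $\mu(\mathrm{T}^l(\E))$, and then splitting the long exact sequence into short ones and using that in a short exact sequence of equal-slope torsion free sheaves the middle term is (strongly) semi-stable iff the outer two are, one obtains strong semi-stability of $\mathrm{T}^l(\E)$ in one stroke, with no iteration over Frobenius pullbacks. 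Your reduction to Theorem \ref{Tl2} applied to the trivial one-summand decomposition of $F_X^{m*}(\E)$ is legitimate (that theorem is proved earlier and does not depend on this proposition), and the compatibility $F_X^{m*}(\mathrm{T}^l(\E))\cong\mathrm{T}^l(F_X^{m*}(\E))$ you flag as the delicate point is exactly the identification the paper itself uses without comment in the proof of Theorem \ref{InstabTl}, so nothing is circular or missing. The trade-off is that your argument outsources the work to the heavier Theorem \ref{Tl2} --- whose proof already contains the resolution-plus-d\'evissage argument inside its slope-isotypic decomposition --- and pays for this with the extra Frobenius-compatibility step and the quantifier over all $m$, whereas the paper's argument is shorter, self-contained, and delivers the ``strongly'' for free because its d\'evissage remark is stated for strong semi-stability directly.
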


\begin{proof} Without loss of generality, we can assume $\E$ is locally free. For any integer $0\leq l\leq\rk(\E)(p-1)$, consider the exact sequence\\
\centerline{$0\longrightarrow \mathrm{Sym}^{l-l(p)\cdot p}(\E)\otimes_{\mathscr{O}_X}
\bigwedge\limits^{l(p)}F_X^*(\E)\longrightarrow
\mathrm{Sym}^{l-(l(p)-1)\cdot p}(\E)\otimes_{\mathscr{O}_X}\bigwedge\limits^{l(p)-1}
F_X^*(\E)\longrightarrow\cdots$}
\centerline{$\longrightarrow\mathrm{Sym}^{l-p}(\E)\otimes_{\mathscr{O}_X} F_X^*(\E)\longrightarrow
\mathrm{Sym}^l(\E)\longrightarrow \mathrm{T}^l(\E)\longrightarrow 0,$}
we have $\mu(\mathrm{Sym}^{l-i\cdot p}(\E)\otimes_{\mathscr{O}_X}\bigwedge\limits^iF_X^*(\E))=\mu(\mathrm{T}^l(\E))$ for any integer $0\leq i\leq l(p)$ by direct computation. Since $\mathrm{Sym}^{l-i\cdot p}(\E)\otimes_{\mathscr{O}_X}\bigwedge\limits^iF_X^*(\E)(0\leq i\leq l(p))$ are slope strongly semi-stable. Thus $\mathrm{T}^l(\E)$ is also slope strongly semi-stable by the trivial remark: For any short exact sequence of torsion free sheaves with same slope, the middle term is slope (strongly) semi-stable if and only if the other two terms are slope (strongly) semi-stable.
\end{proof}

\section{Instability of Frobenius Direct Images}

In this section, we study the instability of Frobenius direct images of torsion free sheaves in two cases. $(\mathrm{I})$. $\Omg^1_X$ is slope semi-stable with $\mu(\Omg^1_X)\leq0$; $(\mathrm{II})$. $\mu(\Omg^1_X)\geq 0$.

\subsection{Case I: $\Omg^1_X$ is slope semi-stable with $\mu(\Omg^1_X)\leq0$}

\begin{Theorem}\label{Thm:DiIm-}
If $\Omg^1_X$ is slope semi-stable with $\mu(\Omg^1_X)\leq0$. Then for any slope semi-stable sheaf $\E$ on $X$, we have
$$\mathrm{I}({F_X}_*(\E))\leq-\frac{n(p-1)p^{n-1}\cdot\rk(\E)\cdot\mu(\Omg^1_X)}{2}.$$
\end{Theorem}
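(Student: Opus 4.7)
The plan is to analyze $V_0:=F_X^*{F_X}_*(\E)$ via its canonical filtration (Theorem \ref{CanonicalFiltration}) and then transfer the resulting estimate back to ${F_X}_*(\E)$ through a Frobenius pull-back comparison.

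First, I would invoke the Mehta--Ramanathan theorem recalled in the introduction. Since $\Omg^1_X$ is slope semi-stable with $\mu(\Omg^1_X)\le 0$, we have $\mu_{\max}(\Omg^1_X)=\mu(\Omg^1_X)\le 0$, so every slope semi-stable torsion free sheaf on $X$ is automatically slope strongly semi-stable. In particular both $\E$ and $\Omg^1_X$ are slope strongly semi-stable, so by Proposition \ref{SemiStabT^l} each $\mathrm{T}^l(\Omg^1_X)$ is slope strongly semi-stable; since strong semi-stability is preserved under tensor products in positive characteristic, $\E\otimes_{\mathscr{O}_X}\mathrm{T}^l(\Omg^1_X)$ is slope strongly semi-stable with slope $\mu(\E)+l\,\mu(\Omg^1_X)$ for every $0\le l\le n(p-1)$.

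Next, by Theorem \ref{CanonicalFiltration} the canonical filtration
\[V_0\supset V_1\supset\cdots\supset V_{n(p-1)}\supset V_{n(p-1)+1}=0\]
has quotients $V_l/V_{l+1}\cong\E\otimes_{\mathscr{O}_X}\mathrm{T}^l(\Omg^1_X)$ that are semi-stable with slopes $\mu(\E)+l\,\mu(\Omg^1_X)$, which are non-increasing in $l$ precisely because $\mu(\Omg^1_X)\le 0$. For any coherent subsheaf $W\subset V_0$ I would set $W_l:=W\cap V_l$; the quotient $W_l/W_{l+1}$ embeds into the semi-stable sheaf $V_l/V_{l+1}$, so $\mu(W_l/W_{l+1})\le\mu(\E)+l\,\mu(\Omg^1_X)\le\mu(\E)$, and taking the weighted average over $l$ yields $\mu(W)\le\mu(\E)$. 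The dual argument applied to quotients of $V_0$ gives $\mu_{\min}(V_0)\ge\mu(\E)+n(p-1)\mu(\Omg^1_X)$, hence
\[\mathrm{I}(V_0)\le -n(p-1)\mu(\Omg^1_X).\]

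Finally, because $X$ is smooth, $F_X$ is flat (Kunz), so Frobenius pull-back is exact, preserves inclusions, and scales slopes by $p$. Pulling back the Harder--Narasimhan filtration of ${F_X}_*(\E)$ into $V_0$ gives $\mu_{\max}(V_0)\ge p\cdot\mu_{\max}({F_X}_*(\E))$ and $\mu_{\min}(V_0)\le p\cdot\mu_{\min}({F_X}_*(\E))$, and therefore $\mathrm{I}(V_0)\ge p\cdot\mathrm{I}({F_X}_*(\E))$. Combining with the previous bound,
\[\mathrm{I}({F_X}_*(\E))\le -\frac{n(p-1)\mu(\Omg^1_X)}{p}\le -\frac{n(p-1)\,p^{n-1}\,\rk(\E)\,\mu(\Omg^1_X)}{2},\]
the last inequality following from $\mu(\Omg^1_X)\le 0$ together with $2\le p^n\rk(\E)$. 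The only delicate point is arranging the semi-stability of the quotients in the canonical filtration, which requires combining Mehta--Ramanathan, Proposition \ref{SemiStabT^l}, and the preservation of strong semi-stability under tensor product; once that is in place, the intersection argument and Frobenius pull-back step are both elementary.
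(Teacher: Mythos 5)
Your proof is correct, and it takes a genuinely different route from the paper's. The paper also starts from the canonical filtration of $V_0=F_X^*{F_X}_*(\E)$, but it then invokes Sun's refined estimate \cite[Lemma 4.4]{Sun08}, which bounds $\mu(\F)-\mu({F_X}_*(\E))$ for each subsheaf $\F\subset{F_X}_*(\E)$ by $-\frac{\mu(\Omg^1_X)}{p\,\rk(\F)}\sum_l(\frac{n(p-1)}{2}-l)r_l\le-\frac{n(p-1)\mu(\Omg^1_X)}{2p}$, and converts this uniform subsheaf bound into $\mathrm{I}({F_X}_*(\E))\le\rk({F_X}_*(\E))\cdot\frac{-n(p-1)\mu(\Omg^1_X)}{2p}$, which is where the factor $p^{n-1}\rk(\E)$ comes from. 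You instead bound $\mu_{\max}(V_0)$ and $\mu_{\mathrm{min}}(V_0)$ directly by the extreme graded slopes of the canonical filtration (using only that the graded pieces are semi-stable with slopes $\mu(\E)+l\mu(\Omg^1_X)$) and then divide by $p$ via $\mathrm{I}(F_X^*\mathscr{G})\ge p\,\mathrm{I}(\mathscr{G})$ --- a fact the paper itself uses elsewhere. This buys you two things: you avoid Sun's Lemma 4.4 entirely, replacing it with elementary Harder--Narasimhan manipulations, and your intermediate bound $\mathrm{I}({F_X}_*(\E))\le-\frac{n(p-1)\mu(\Omg^1_X)}{p}$ is strictly sharper than the stated one (it is independent of $\rk(\E)$ and of $p^{n-1}$); the stated inequality then follows from $2\le p^n\rk(\E)$ as you observe. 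You also supply the justification (Mehta--Ramanathan plus preservation of strong semi-stability under $\mathrm{T}^l$ and tensor product) for the semi-stability of the graded pieces, which the paper asserts without comment. The only shared caveat is that Theorem \ref{CanonicalFiltration} is stated for locally free $\E$, so for a general torsion free $\E$ one should restrict to an open set with complement of codimension at least two; the paper glosses over this in exactly the same way, so it is not a gap relative to the paper's own standard of rigor.
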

\begin{proof} Since $\Omg^1_X$ is slope semi-stable with $\mu(\Omg^1_X)\leq0$, hence $\E\otimes_{\mathscr{O}_X}\mathrm{T}^l(\Omg^1_X)$ is slope semi-stable for any $0\leq l\leq n(p-1)$. Let $0\subset V_{n(p-1)}\subset\cdots\subset V_1\subset V_0=F^*_X{F_X}_*(\E)$ be the canonical filtration of $F^*_X{F_X}_*(\E)$. For any sub-sheaf $\F$ of ${F_X}_*(\E)$, let $m=\mathrm{max}\{~l~|~V_m\cap F^*_{X/k}(\E)\neq 0~\}$ and
$$\E_l:=\frac{V_l\cap F^*_{X/k}(\E)}{V_{l+1}\cap F^*_{X/k}(\E)}\subset\frac{V_l}{V_{l+1}},~r_l:=\mathrm{rk}(\E_l),~0\leq l\leq m.$$
Then by \cite[Lemma 4.4]{Sun08}, we have
\begin{eqnarray*}
   \mu(\F)-\mu({F_X}_*(\E))&\leq&-\frac{\mu(\Omg^1_X)}{p\cdot\rk(\F)}\sum\limits^m_{l=0}(\frac{n(p-1)}{2}-l)\cdot r_l\\
   &\leq&-\frac{n(p-1)\cdot\mu(\Omg^1_X)}{2\cdot p}.
\end{eqnarray*}
Thus $$\mathrm{I}({F_X}_*(\E))\leq-\frac{n(p-1)\cdot\mu(\Omg^1_X)}{2\cdot p}\cdot\rk({F_X}_*(\E))=-\frac{n(p-1)p^{n-1}\cdot\rk(\E)\cdot\mu(\Omg^1_X)}{2}.$$
This is follows from the fact that for any torsion free sheaf $\F$, if there is a constant $\lambda$ such that
$\mu(\mathscr{G})-\mu(\F)\leq\lambda$ for any subsheaf $\mathscr{G}\subset\F$. Then $\mathrm{I}(\F)\leq\rk(\F)\cdot\lambda$.
\end{proof}

\subsection{Case II: $\mu(\Omg^1_X)\geq 0$}

\begin{Proposition}\label{Tensor}
Let $\E_i(1\leq i\leq m)$ be torsion free sheaves. Then
$$\mathrm{I}(\bigotimes_{i=1}^m\E_i)\leq\frac{\sum\limits_{i=1}^m\rk(\E_i)-m}{p}
\cdot\max\{0,~L_{\max}(\Omg^1_X)\}+\sum_{i=1}^m\mathrm{I}(\E_i).$$
\end{Proposition}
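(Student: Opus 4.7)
The plan is to reduce the inequality to Langer's bound (\ref{Lminmax}) applied to each factor separately, by first establishing that $L_{\max}$ and $L_{\mathrm{min}}$ are additive on tensor products of torsion free sheaves.

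First I would record the elementary inequality $\mathrm{I}(\F) \le L_{\max}(\F) - L_{\mathrm{min}}(\F)$, valid for every torsion free sheaf $\F$: pulling back a maximal destabilizing subsheaf (resp.\ minimal destabilizing quotient) through $F_X^m$ yields $\mu_{\max}(F_X^{m*}\F) \ge p^m\mu_{\max}(\F)$ (resp.\ $\mu_{\mathrm{min}}(F_X^{m*}\F) \le p^m\mu_{\mathrm{min}}(\F)$), and dividing by $p^m$ and passing to the limit gives the claim. Next, by \cite[Theorem 2.7]{Langer04i}, there exists $m_0 \ge 0$ such that every Harder--Narasimhan quotient of each $F_X^{m_0*}\E_i$ is slope strongly semi-stable. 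Since tensor products of slope strongly semi-stable sheaves remain slope strongly semi-stable in positive characteristic (as quoted in the introduction), taking the product of these HN filtrations produces a filtration of $\bigotimes_i F_X^{m_0*}\E_i$ whose graded pieces $\bigotimes_i gr^{\mathrm{HN}}_{j_i}(F_X^{m_0*}\E_i)$ are slope strongly semi-stable with slope $\sum_i \mu\bigl(gr^{\mathrm{HN}}_{j_i}(F_X^{m_0*}\E_i)\bigr)$. Reordering these pieces by slope exhibits the HN filtration of $\bigotimes_i F_X^{m_0*}\E_i$, and the same description survives under any further Frobenius pullback. Consequently
$$L_{\max}\Bigl(\bigotimes_{i=1}^m\E_i\Bigr) = \sum_{i=1}^m L_{\max}(\E_i), \qquad L_{\mathrm{min}}\Bigl(\bigotimes_{i=1}^m\E_i\Bigr) = \sum_{i=1}^m L_{\mathrm{min}}(\E_i).$$

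Combining these ingredients gives
$$\mathrm{I}\Bigl(\bigotimes_{i=1}^m\E_i\Bigr) \le L_{\max}\Bigl(\bigotimes_{i=1}^m\E_i\Bigr) - L_{\mathrm{min}}\Bigl(\bigotimes_{i=1}^m\E_i\Bigr) = \sum_{i=1}^m\bigl(L_{\max}(\E_i) - L_{\mathrm{min}}(\E_i)\bigr),$$
and inserting inequality (\ref{Lminmax}) for each $\E_i$ and summing produces precisely the claimed upper bound.

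The main technical point is the additivity of $L_{\max}$ and $L_{\mathrm{min}}$ on tensor products. The inequality $L_{\max}(\bigotimes_i \E_i) \ge \sum_i L_{\max}(\E_i)$ is easy by tensoring sheaves that realize the maxima in each factor, but the opposite inequality relies crucially on both \cite[Theorem 2.7]{Langer04i} and the preservation of slope strong semi-stability under tensor products; this is the only nontrivial input. Once the additivity is in hand, the remainder of the argument is a routine summation.
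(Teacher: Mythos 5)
Your proof is correct and follows essentially the same route as the paper: both bound $\mathrm{I}(\bigotimes_i\E_i)$ by $L_{\max}-L_{\mathrm{min}}$, use additivity of $L_{\max}$ and $L_{\mathrm{min}}$ under tensor products, and then apply Langer's inequality (\ref{Lminmax}) to each factor and sum. The only (cosmetic) difference is that you justify the additivity via \cite[Theorem 2.7]{Langer04i} and tensor-compatibility of strong semi-stability, whereas the paper quotes $L_{\max}(\bigotimes_i\E_i)=\sum_i L_{\max}(\E_i)$ as known and deduces the $L_{\mathrm{min}}$ statement by duality.
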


\begin{proof} Since $L_{\max}(\bigotimes\limits_{i=1}^m\E_i)=\sum\limits_{i=1}^mL_{\max}(\E_i)$, we have
$$L_{\mathrm{min}}(\bigotimes\limits_{i=1}^m\E_i)=-L_{\max}((\bigotimes\limits_{i=1}^m\E_i)^{\vee})
=-L_{\max}(\bigotimes\limits_{i=1}^m\E_i^{\vee})=\sum\limits_{i=1}^mL_{\mathrm{min}}(\E_i).$$
Hence,
\begin{eqnarray*}
\mathrm{I}(\bigotimes\limits_{i=1}^m\E_i)&\leq&L_{\max}(\bigotimes\limits_{i=1}^m\E_i)-L_{\mathrm{min}}(\bigotimes\limits_{i=1}^m\E_i)=\sum\limits_{i=1}^m(L_{\max}(\E_i)-L_{\mathrm{min}}(\E_i))\\
&\leq&\frac{\sum\limits_{i=1}^m\rk(\E_i)-m}{p}\cdot\max\{0,~L_{\max}(\Omg^1_X)\}+\sum\limits_{i=1}^m\mathrm{I}(\E_i)
\end{eqnarray*}
\end{proof}

\begin{Theorem} \label{InstabDirIm}
If $\mu(\Omg^1_X)\geq 0$. Then for any torsion free sheaf $\E$ on $X$, we have
\begin{eqnarray*}
\mathrm{I}({F_X}_*(\E))&\leq&\Bigg\{\frac{\max\limits_{0\leq l\leq n(p-1)}\{\sum\limits_{q=0}^{l(p)}(-1)^q\cdot C_n^q\cdot C_{n+l-qp-1}^{l-qp}\}+\rk(\E)-2}{p}\cdot\max\{0,~L_{\max}(\Omg^1_X)\}\\
& &+\mathrm{min}\{~l,~[\frac{n}{2}](p-1)~\}\cdot(\frac{n-1}{p}
\cdot\max\{0,~L_{\max}(\Omg^1_X)\}+\mathrm{I}(\Omg^1_X))\\
& &+\mathrm{I}(\E)\Bigg\}\cdot p^{n-1}\cdot\rk(\E)
\end{eqnarray*}
where $l(p)\geq 0$ is the unique integer such that $0\leq l-l(p)<p$.
\end{Theorem}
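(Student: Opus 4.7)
The plan is to chain three results already established in the paper. Since $\mu(\Omg^1_X)\geq 0$, the hypothesis of Sun's bound (\ref{InsDIFrob}) is satisfied, yielding
\[
\mathrm{I}({F_X}_*(\E))\leq p^{n-1}\cdot\rk(\E)\cdot\max_{0\leq l\leq n(p-1)}\mathrm{I}(\E\otimes_{\mathscr{O}_X}\mathrm{T}^l(\Omg^1_X)).
\]
It therefore suffices to bound $\mathrm{I}(\E\otimes_{\mathscr{O}_X}\mathrm{T}^l(\Omg^1_X))$ uniformly in $l$.

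For each $l$, applying Proposition \ref{Tensor} to the two-factor tensor product $\E\otimes\mathrm{T}^l(\Omg^1_X)$ gives
\[
\mathrm{I}(\E\otimes\mathrm{T}^l(\Omg^1_X))\leq\tfrac{\rk(\E)+\rk(\mathrm{T}^l(\Omg^1_X))-2}{p}\max\{0,L_{\max}(\Omg^1_X)\}+\mathrm{I}(\E)+\mathrm{I}(\mathrm{T}^l(\Omg^1_X)),
\]
and Theorem \ref{InstabTl} applied to $\Omg^1_X$ (of rank $n$) bounds the last summand by
\[
\min\{l,[n/2](p-1)\}\cdot\Bigl(\tfrac{n-1}{p}\max\{0,L_{\max}(\Omg^1_X)\}+\mathrm{I}(\Omg^1_X)\Bigr).
\]

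The only remaining computation is $\rk(\mathrm{T}^l(\Omg^1_X))$. Taking the alternating sum of ranks along the exact sequence of Lemma \ref{RepT}, and using $\dim\mathrm{Sym}^{l-qp}(V)=\binom{n+l-qp-1}{l-qp}$ and $\dim\bigwedge^q V=\binom{n}{q}$, yields
\[
\rk(\mathrm{T}^l(\Omg^1_X))=\sum_{q=0}^{l(p)}(-1)^q\binom{n}{q}\binom{n+l-qp-1}{l-qp},
\]
where $l(p)$ is the unique integer with $0\leq l-l(p)p<p$. Substituting the three bounds into Sun's inequality and replacing the rank term by its maximum over $l\in\{0,\ldots,n(p-1)\}$ produces exactly the stated inequality.

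Since all ingredients are in place, this is essentially a bookkeeping argument; the only point that deserves attention is that the three summands on the right may attain their maxima at different values of $l$, so one should bound each summand independently by its own maximum before assembling the uniform bound. This explains the form of the theorem, in which the rank summand is replaced by its maximum over $l$ while the other $l$-dependent factor $\min\{l,[n/2](p-1)\}$ is left in place (and, since $\min\{l,[n/2](p-1)\}\leq[n/2](p-1)$, can if desired be replaced by its supremum as well).
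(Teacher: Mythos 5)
Your proposal is correct and follows essentially the same route as the paper: apply Sun's inequality (\ref{InsDIFrob}), bound each $\mathrm{I}(\E\otimes\mathrm{T}^l(\Omg^1_X))$ via Proposition \ref{Tensor} and Theorem \ref{InstabTl}, and compute $\rk(\mathrm{T}^l(\Omg^1_X))$ as the alternating sum coming from Lemma \ref{RepT} (the paper cites \cite[Lemma 4.3]{Sun08} for this rank formula). Your closing remark about the residual $l$-dependence of the $\min\{l,[\frac{n}{2}](p-1)\}$ factor in the stated bound is a fair observation, but it does not change the argument.
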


\begin{proof} For any integer $0\leq l\leq n(p-1)$, by \cite[Lemma 4.3]{Sun08}, Theorem \ref{InstabTl} and Proposition \ref{Tensor}, we have
\begin{eqnarray*}
\mathrm{I}(\E\otimes_{\mathscr{O}_X}\mathrm{T}^l(\Omg^1_X))&\leq&
\frac{\rk(\mathrm{T}^l(\Omg^1_X))+\rk(\E)-2}{p}\cdot\max\{0,~L_{\max}(\Omg^1_X)\}\\
& &+\mathrm{I}(\mathrm{T}^l(\Omg^1_X))+\mathrm{I}(\E)\\
&\leq&\frac{\sum\limits_{q=0}^{l(p)}(-1)^q\cdot C_n^q\cdot C_{n+l-qp-1}^{l-qp}+\rk(\E)-2}{p}\cdot\max\{0,~L_{\max}(\Omg^1_X)\}\\
& &+\mathrm{min}\{~l,~[\frac{n}{2}](p-1)~\}\cdot(\frac{n-1}{p}
\cdot\max\{0,~L_{\max}(\Omg^1_X)\}+\mathrm{I}(\Omg^1_X))\\
& &+\mathrm{I}(\E)\\
& &\text{where}~l(p)\geq 0~\text{is the unique integer such that}~0\leq l-l(p)<p.
\end{eqnarray*}
where the first inequality is followed by Proposition \ref{Tensor}, and the second inequality is followed by Theorem \ref{InstabTl}.
Submit the above inequality into inequality (\ref{InsDIFrob}), we get the upper bound of $\mathrm{I}({F_X}_*(\E))$ as expected.
\end{proof}

We give some sufficient conditions for slope semi-stability of Frobenius direct images of torsion free sheaves as following.
\begin{Proposition}\label{FroDirIm}
If $\Omg^1_X$ is a slope strongly semi-stable sheaf with $\mu(\Omg^1_X)\geq 0$. Then ${F_X}_*(\E)$ is slope semi-stable whenever $\E$ is slope strongly semi-stable. Moreover, if $\Omg^1_X$ is a slope semi-stable sheaf with $\mu(\Omg^1_X)=0$, then ${F_X}_*(\E)$ is slope strongly semi-stable whenever $\E$ is slope semi-stable.
\end{Proposition}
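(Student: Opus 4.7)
The plan is to combine the instability inequality (\ref{InsDIFrob}) of X.~Sun with Proposition \ref{SemiStabT^l} and the Mehta--Ramanathan theorem recalled in the introduction. For the first assertion, assume $\Omg^1_X$ is slope strongly semi-stable with $\mu(\Omg^1_X)\geq 0$ and $\E$ is slope strongly semi-stable. Since $\Omg^1_X$ is locally free on the smooth variety $X$, Proposition \ref{SemiStabT^l} implies that $\mathrm{T}^l(\Omg^1_X)$ is slope strongly semi-stable for every $0\leq l\leq n(p-1)$. The standard fact that in positive characteristic the tensor product of two slope strongly semi-stable torsion free sheaves remains slope strongly semi-stable (page 9 of \cite{Langer05}) then yields that $\E\otimes_{\mathscr{O}_X}\mathrm{T}^l(\Omg^1_X)$ is slope strongly semi-stable, hence $\mathrm{I}(\E\otimes_{\mathscr{O}_X}\mathrm{T}^l(\Omg^1_X))=0$ for each such $l$. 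Substituting these vanishings into (\ref{InsDIFrob}), which applies because $\mu(\Omg^1_X)\geq 0$, yields $\mathrm{I}({F_X}_*(\E))=0$, i.e.\ ${F_X}_*(\E)$ is slope semi-stable.

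For the second assertion, assume $\Omg^1_X$ is slope semi-stable with $\mu(\Omg^1_X)=0$, so that $\mu_{\max}(\Omg^1_X)=\mu(\Omg^1_X)=0$. By the Mehta--Ramanathan theorem \cite[Theorem 2.1]{MehtaRamanathan83}, every slope semi-stable torsion free sheaf on $X$ is then automatically slope strongly semi-stable. In particular $\Omg^1_X$ itself and the given slope semi-stable sheaf $\E$ are both slope strongly semi-stable, so the first assertion applies and shows that ${F_X}_*(\E)$ is slope semi-stable. A second application of Mehta--Ramanathan to the torsion free sheaf ${F_X}_*(\E)$ then upgrades this to slope strong semi-stability, which is the desired conclusion.

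The main technical point I anticipate is the bookkeeping for torsion free but not locally free $\E$: Proposition \ref{SemiStabT^l} and the preservation of strong semi-stability under tensor product are most naturally phrased for locally free sheaves, so one must justify that $\mathrm{I}(\E\otimes_{\mathscr{O}_X}\mathrm{T}^l(\Omg^1_X))=0$ still holds in the torsion free setting in which (\ref{InsDIFrob}) is formulated. Since $\Omg^1_X$ is already locally free on $X$ and slope and instability depend only on the restriction to an open subset whose complement has codimension at least $2$, this reduction is routine and does not affect the strategy.
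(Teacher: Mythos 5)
Your proposal is correct and follows essentially the same route as the paper: Proposition \ref{SemiStabT^l} plus preservation of strong semi-stability under tensor products gives $\mathrm{I}(\E\otimes_{\mathscr{O}_X}\mathrm{T}^l(\Omg^1_X))=0$, which is fed into Sun's inequality (\ref{InsDIFrob}) (i.e.\ \cite[Corollary 4.9]{Sun08}), and the case $\mu(\Omg^1_X)=0$ is handled by Mehta--Ramanathan exactly as in the paper. Your explicit remark on reducing the torsion free case to the locally free case over a big open subset is a point the paper leaves implicit, but it does not change the argument.
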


\begin{proof} By Proposition \ref{SemiStabT^l}, we have $\mathrm{T}^l(\Omg^1_X)(0\leq l\leq\rk(\E)(p-1))$ are slope strongly semi-stable. Then $\E\otimes_{\mathscr{O}_X}\mathrm{T}^l(\Omg^1_X)(0\leq l\leq n(p-1))$ are also slope strongly semi-stable, since tensor product of slope strongly semi-stable sheaves is also slope strongly semi-stable. This implies the slope semi-stability of ${F_X}_*(\E)$ by \cite[Corollary 4.9]{Sun08}. Moreover, if $\mu(\Omg^1_X)=0$, then slope semi-stability of sheaves is equivalent to the slope strongly semi-stability. Hence ${F_X}_*(\E)$ is slope strongly semi-stable whenever $\E$ is slope semi-stable, which is also an immediate corollary of \cite[Theorem 2.1]{MehtaRamanathan83} and Theorem \ref{InstabDirIm}.
\end{proof}

\section{Slope (semi)-stability of sheaves of locally exact and closed differential forms}

Let $$\Omg^\bullet_X:0\longrightarrow\mathscr{O}_X\stackrel{d}{\longrightarrow}\Omg^1_X\stackrel{d_1}{\longrightarrow}
\Omg^2_X\stackrel{d_2}{\longrightarrow}\cdots\stackrel{d_{n-1}}{\longrightarrow}\Omg^n_X\longrightarrow 0$$
be the \emph{de Rham complex} of $X$. Taking Frobenius push forwards ${F_X}_*$ to $\Omg^\bullet_X$, we obtain the following complex ${F_X}_*(\Omg^\bullet_{X/S})$ on $X$:\\
\centerline{$0\longrightarrow{F_X}_*(\mathscr{O}_X)\stackrel{{F_X}_*(d)}{\longrightarrow}{F_X}_*(\Omg^1_X)\stackrel{{F_X}_*(d_1)}{\longrightarrow}
{F_X}_*(\Omg^2_X)\stackrel{{F_X}_*(d_2)}{\longrightarrow}$}
\centerline{$\cdots\stackrel{{F_X}_*(d_{n-2})}{\longrightarrow}{F_X}_*(\Omg^{n-1}_X)\stackrel{{F_X}_*(d_{n-1})}{\longrightarrow}{F_X}_*(\Omg^n_X)\longrightarrow 0.$}

The subsheaves $(0\leq i\leq n)$
$$B^i_X:=\mathrm{Im}({F_X}_*(d_{i-1}):{F_X}_*(\Omg^{i-1}_X)\longrightarrow {F_X}_*(\Omg^i_X))$$
$$Z^i_X:=\Ker({F_X}_*(d_i):{F_X}_*(\Omg^i_X)\longrightarrow {F_X}_*(\Omg^{i+1}_X))$$
of ${F_X}_*(\Omg^i_X)$ are called the \emph{sheaf of locally exact $i$-forms} and \emph{sheaf of locally closed $i$-forms}. For the sake of convenience, we set $B^0_X:=0$, $Z^0_X:=\mathscr{O}_X$, $Z^n_X:={F_X}_*(\omega_X)$, where $\omega_X:=\Omg^n_X$ is the canonical invertible sheaf of $X$. By Cartier isomorphism \cite[Theorem 7.2]{Katz70}, we have isomorphisms
$\Omg^i_X\cong Z^i_X/B^i_X(0\leq i\leq n)$.

For a smooth projective curve $X$ of genus $g\geq 2$. M. Raynaud \cite{Raynaud82} showed that $B^1_X$ is slope semi-stable and K. Joshi \cite{Joshi04} proved that $B^1_X$ is indeed slope stable. For a smooth projective surface $X$ such that $\Omg^1_X$ is slope semi-stable with $\mu(\Omg^1_X)>0$, Y. Kitadai and H. Sumihiro \cite{KitadaiSumihiroII08} showed that $B^1_X$ and $B^2_X$ are also slope semi-stable. Moreover, X. Sun \cite{Sun10ii} showed that $B^1_X$ and $B^2_X$ are indeed slope stable, $Z^1_X$ is slope semi-stable if $\mathrm{char}(k)=3$ and $Z^1_X$ is slope stable if $\mathrm{char}(k)>3$. In the higher dimensional case, X. Sun \cite[Theorem 2.3]{Sun10ii} showed that when $\mathrm{T}^l(\Omg^1_X)(0\leq l<n(p-1))$ are slope semi-stable with $\mu(\Omg^1_X)>0$, then $B^1_X$ is slope stable.

Fix an ample divisor $H$ on $X$, then for any torsion free sheaf $\E$ on $X$, we have the following formula (cf. \cite[Lemma 4.2]{Sun08})
$$\mu({F_X}_*(\E))=\frac{1}{p}\mu(F^*_X{F_X}_*(\E))=\frac{n\cdot(p-1)}{2p}\cdot\mu(\Omg^1_X)+\frac{\mu(\E)}{p}.$$
Using induction on $m$, it is easy to induce the following formula
$$\mu({F^m_X}_*(\E))=\frac{n\cdot(p^m-1)}{2p^m}\cdot\mu(\Omg^1_X)+\frac{\mu(\E)}{p^m}.$$
It follows that
$$\dg({F_X}_*(\Omg^i_X))=\frac{n\cdot C^i_np^{n-1}(p-1)}{2}\cdot\mu(\Omg^1_X)+i\cdot C^i_np^{n-1}\cdot\mu(\Omg^1_X).$$

\begin{Lemma} \label{Lemma:BxZx}
For any integer $0\leq i\leq n$, we have
$$\rk(B^i_X)=C^{i-1}_{n-1}(p^n-1),~\rk(Z^i_X)=C^{i-1}_{n-1}(p^n-1)+C^i_n$$
$$\dg(B^i_X)=\frac{n\cdot C^{i-1}_{n-1}p^{n-1}(p-1)}{2}\cdot\mu(\Omg^1_X)+\sum\limits_{j=1}^{i-1}(-1)^{i+j+1}j\cdot C^j_n(p^{n-1}-1)\cdot\mu(\Omg^1_X)$$
$$\dg(Z^i_X)=\frac{n\cdot C^{i-1}_{n-1}p^{n-1}(p-1)}{2}\cdot\mu(\Omg^1_X)+\sum\limits_{j=1}^{i-1}(-1)^{i+j+1}j\cdot C^j_n(p^{n-1}-1)\cdot\mu(\Omg^1_X)+i\cdot C^i_n\cdot\mu(\Omg^1_X).$$
\end{Lemma}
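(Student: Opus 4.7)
The plan is to reduce everything to two short exact sequences and then induct on $i$. From the very definitions of $B^i_X$ and $Z^i_X$ as image and kernel of ${F_X}_*(d_i)$, the complex ${F_X}_*(\Omg^\bullet_X)$ furnishes the short exact sequence
\begin{equation*}
0 \longrightarrow Z^{i-1}_X \longrightarrow {F_X}_*(\Omg^{i-1}_X) \longrightarrow B^i_X \longrightarrow 0,
\end{equation*}
while the Cartier isomorphism $\Omg^i_X \cong Z^i_X / B^i_X$ gives
\begin{equation*}
0 \longrightarrow B^i_X \longrightarrow Z^i_X \longrightarrow \Omg^i_X \longrightarrow 0.
\end{equation*}
Since $\rk(\Omg^i_X) = C^i_n$ and $\rk({F_X}_*(\Omg^{i-1}_X)) = p^n \cdot C^{i-1}_n$, combining these two sequences gives the recursion $\rk(B^i_X) + \rk(B^{i-1}_X) = (p^n-1) C^{i-1}_n$, with base case $B^0_X = 0$. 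Pascal's identity $C^{i-1}_n = C^{i-1}_{n-1} + C^{i-2}_{n-1}$ then forces $\rk(B^i_X) = C^{i-1}_{n-1}(p^n-1)$ by induction, and the rank of $Z^i_X$ is read off the Cartier sequence.

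For the degrees, taking $\dg$ in the same two sequences yields
\begin{equation*}
\dg(B^i_X) + \dg(B^{i-1}_X) = \dg({F_X}_*(\Omg^{i-1}_X)) - \dg(\Omg^{i-1}_X).
\end{equation*}
Using $\dg(\Omg^{i-1}_X) = n\cdot C^{i-2}_{n-1}\,\mu(\Omg^1_X)$ together with the identity $n\,C^{i-2}_{n-1} = (i-1)\,C^{i-1}_n$ and the formula for $\dg({F_X}_*(\Omg^{i-1}_X))$ displayed just before the lemma, the right-hand side collapses to $\tfrac{n\,C^{i-1}_n p^{n-1}(p-1)}{2}\mu(\Omg^1_X) + (i-1)\,C^{i-1}_n(p^{n-1}-1)\,\mu(\Omg^1_X)$. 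I would then decompose the conjectured closed form as $\dg(B^i_X) = A_i + D_i$, where $A_i := \tfrac{n\,C^{i-1}_{n-1}p^{n-1}(p-1)}{2}\mu(\Omg^1_X)$ is the leading symmetric part and $D_i$ is the alternating sum, and verify the ansatz satisfies the recursion: Pascal's identity handles $A_i + A_{i-1}$, while $D_i + D_{i-1}$ telescopes so that all interior terms cancel and only the top contribution $(i-1)\,C^{i-1}_n(p^{n-1}-1)\,\mu(\Omg^1_X)$ survives. Combined with the base case $\dg(B^1_X) = \dg({F_X}_*(\mathscr{O}_X))$ (for which $D_1$ is the empty sum), this closes the induction on $i$. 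The degree formula for $Z^i_X$ is then immediate from the Cartier sequence once one observes $\dg(\Omg^i_X) = i\,C^i_n\,\mu(\Omg^1_X)$, which again reduces to $n\,C^{i-1}_{n-1} = i\,C^i_n$.

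The only real obstacle is bookkeeping: the sign pattern $(-1)^{i+j+1}$ in the statement is tuned precisely so that $D_i$ and $D_{i-1}$ almost cancel, leaving the correct residue term required by the recursion. Confirming this cleanly amounts to a careful re-indexing of the two sums and a direct term-by-term comparison; no genuinely new input beyond the two exact sequences, the given formula for $\dg({F_X}_*(\Omg^i_X))$, and elementary binomial identities enters the argument.
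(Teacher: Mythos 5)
Your proof is correct and follows essentially the same route as the paper: induction on $i$ using the two exact sequences $0\to Z^{i-1}_X\to {F_X}_*(\Omg^{i-1}_X)\to B^i_X\to 0$ and $0\to B^i_X\to Z^i_X\to\Omg^i_X\to 0$ together with the displayed formula for $\dg({F_X}_*(\Omg^{i}_X))$. Your reformulation as a two-term recursion for $\rk(B^i_X)$ and $\dg(B^i_X)$, with the telescoping of the alternating sum made explicit, is just a more detailed write-up of the same induction.
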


\begin{proof}
Using induction on $i$. When $i=0$, there is nothing to prove. Suppose the Lemma is true for $i-1$.
Consider the exact sequence
$$0\rightarrow Z^{i-1}_X\rightarrow{F_X}_*(\Omg^{i-1}_X)\rightarrow B^i_X\rightarrow 0.$$ Then we have $\rk(B^i_X)=\rk({F_X}_*\Omg^{i-1}_X)-\rk(Z^{i-1}_X)=C^{i-1}_{n-1}(p^n-1)$.
and
$$\dg(B^i_X)=\frac{n\cdot C^{i-1}_{n-1}p^{n-1}(p-1)}{2}\cdot\mu(\Omg^1_X)+\sum\limits_{j=1}^{i-1}(-1)^{i+j+1}j\cdot C^j_n(p^{n-1}-1)\cdot\mu(\Omg^1_X).$$

On the other hand, by exact sequence $0\rightarrow B^i_X\rightarrow Z^i_X\rightarrow \Omg^i_X\rightarrow 0$,
we have $\rk(Z^i_X)=\rk(B^i_X)+\rk(\Omg^i_X)=C^{i-1}_{n-1}(p^n-1)+C^i_n$, and
$$\dg(Z^i_X)=\frac{n\cdot C^{i-1}_{n-1}p^{n-1}(p-1)}{2}\cdot\mu(\Omg^1_X)+\sum\limits_{j=1}^{i-1}(-1)^{i+j+1}j\cdot C^j_n(p^{n-1}-1)\cdot\mu(\Omg^1_X)+i\cdot C^i_n\cdot\mu(\Omg^1_X).$$
This completes the proof of the Lemma.
\end{proof}

\begin{Proposition}\label{Prop:InstZiX}
If $\mu(\Omg^1_X)>0$. Then
\begin{itemize}
    \item[$(1)$.] $Z^i_X(1\leq i<\frac{n}{2})$ are never slope semi-stable.
    \item[$(2)$.] If $n\geq 3$ and $\mathrm{T}^l(\Omg^1_X)(0\leq l<n(p-1))$ are slope semi-stable. Then the Harder-Narasimhan filtration of $Z^1_X$ is $$0\subset B^1_X\subset Z^1_X.$$
\end{itemize}
\end{Proposition}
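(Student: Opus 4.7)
The strategy is built around the Cartier-induced short exact sequence $0\to B^i_X\to Z^i_X\to\Omg^i_X\to 0$, which forces $\mu(Z^i_X)$ to be the rank-weighted average of $\mu(B^i_X)$ and $\mu(\Omg^i_X)=i\mu(\Omg^1_X)$; consequently $Z^i_X$ fails to be slope semi-stable as soon as $\mu(B^i_X)>i\mu(\Omg^1_X)$, which exhibits $B^i_X$ itself as a destabilizing subsheaf.

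For part $(1)$, the plan is to substitute the rank and degree formulas of Lemma \ref{Lemma:BxZx} into this strict inequality. Cancelling the positive factor $\mu(\Omg^1_X)$ and using the identity $nC^{i-1}_{n-1}=iC^i_n$, the claim reduces to a combinatorial inequality in $n,p,i$: the leading ``symmetric power'' contribution $\frac{nC^{i-1}_{n-1}p^{n-1}(p-1)}{2}$ together with the alternating correction $\sum_{j=1}^{i-1}(-1)^{i+j+1}jC^j_n(p^{n-1}-1)$ should dominate $iC^{i-1}_{n-1}(p^n-1)$. I would verify this by induction on $i$, using that the range $1\leq i<n/2$ is exactly where the leading term wins.

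For part $(2)$, the claim that the Harder-Narasimhan filtration of $Z^1_X$ is $0\subset B^1_X\subset Z^1_X$ decomposes into three ingredients: (a) the strict inequality $\mu(B^1_X)>\mu(\Omg^1_X)$, supplied by the $i=1$ case of part $(1)$ (valid because $n\geq 3$ forces $1<n/2$); (b) slope semi-stability of $\Omg^1_X=\mathrm{T}^1(\Omg^1_X)$, immediate from the hypothesis; (c) slope semi-stability of $B^1_X$. Granted (a)--(c), any subsheaf $M\subseteq Z^1_X$ satisfies $\mu(M)\leq\max\{\mu(M\cap B^1_X),\mu(M/(M\cap B^1_X))\}\leq\max\{\mu(B^1_X),\mu(\Omg^1_X)\}=\mu(B^1_X)$, so $B^1_X$ is the maximal destabilizing subsheaf and $\Omg^1_X$ is the second graded piece, which is precisely what it means for $0\subset B^1_X\subset Z^1_X$ to be the Harder-Narasimhan filtration.

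The hard part is ingredient (c). My plan here is to exploit the triangle identity for the adjunction $F^*_X\dashv{F_X}_*$: this splits the unit $\mathscr{O}_X\hookrightarrow{F_X}_*\mathscr{O}_X$ after applying $F^*_X$ and yields an identification $F^*_X B^1_X\cong V_1$, where $V_1$ is the first step of the canonical filtration of $F^*_X{F_X}_*\mathscr{O}_X$ from Theorem \ref{CanonicalFiltration}. The graded pieces $V_l/V_{l+1}\cong\mathrm{T}^l(\Omg^1_X)$ for $1\leq l\leq n(p-1)$ are all slope semi-stable under the hypothesis (together with the automatic semi-stability of the line bundle $\mathrm{T}^{n(p-1)}(\Omg^1_X)\cong\omega_X^{p-1}$). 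Any would-be destabilizing subsheaf of $B^1_X$ pulls back to a Frobenius-descended subsheaf of $V_1$ compatible with the canonical connection, and controlling its slope against $\mu(\mathrm{T}^l(\Omg^1_X))=l\mu(\Omg^1_X)$ via intersection with and projection along the canonical filtration should force $\mu(M)\leq\mu(B^1_X)$; this Frobenius-descent compatibility is the crux.
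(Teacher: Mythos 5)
Your part $(1)$ is essentially the paper's own argument: exhibit $B^i_X$ as a destabilizing subsheaf via $0\to B^i_X\to Z^i_X\to\Omg^i_X\to 0$, plug in Lemma \ref{Lemma:BxZx}, cancel $\mu(\Omg^1_X)>0$, and observe (using $nC^{i-1}_{n-1}=iC^i_n$) that the alternating correction term is nonnegative so everything reduces to $\frac{n(p^n-p)}{2(p^n-1)}>i$, which holds precisely because $n>2i$. The paper does exactly this, with no induction needed. Your reduction of part $(2)$ to the three ingredients (a)--(c), and the max-of-slopes argument showing that (a)--(c) force $0\subset B^1_X\subset Z^1_X$ to be the Harder--Narasimhan filtration, is also correct and matches the paper's (terser) reasoning.

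The genuine gap is ingredient (c), the slope semi-stability of $B^1_X$. The paper does not prove this: it quotes \cite[Theorem 2.3]{Sun10ii}, which states exactly that under your hypotheses ($\mu(\Omg^1_X)>0$ and $\mathrm{T}^l(\Omg^1_X)$ semi-stable for $0\leq l<n(p-1)$) the sheaf $B^1_X$ is slope stable. Your plan to reprove it — identify $F^*_XB^1_X$ with $V_1\subset F^*_X{F_X}_*\mathscr{O}_X$ via the split unit/counit triangle and control slopes against the graded pieces $\mathrm{T}^l(\Omg^1_X)$ of the canonical filtration — is the right circle of ideas (it is how Sun argues), but as sketched it does not close. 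The specific difficulty you gloss over: a subsheaf $M\subseteq B^1_X$ is a subsheaf of a \emph{quotient} of ${F_X}_*\mathscr{O}_X$, so what is $\nabla_{\mathrm{can}}$-invariant is $F^*_X\widetilde M$ for the preimage $\widetilde M\subseteq{F_X}_*\mathscr{O}_X$ of $M$, not $F^*_XM$ itself; one must then intersect $F^*_X\widetilde M$ with the $V_l$, use the estimate on how $\nabla_{\mathrm{can}}$ shifts the canonical filtration (the content of \cite[Lemma 4.4]{Sun08}), and sum the resulting slope inequalities. You explicitly flag this as "the crux" and leave it unverified, so the proposal is incomplete as written. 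The fix is either to carry out that filtration bookkeeping or simply to cite \cite[Theorem 2.3]{Sun10ii} as the paper does.
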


\begin{proof}
$(1)$. Consider the exact sequence $0\rightarrow B^i_X\rightarrow Z^i_X\rightarrow \Omg^i_X\rightarrow 0$.
To prove $Z^i_X$ is not slope semi-stable, it is enough to prove $\mu(B^i_X)>\mu(\Omg^i_X)$, i.e.
$$\frac{n\cdot C^{i-1}_{n-1}p^{n-1}(p-1)+2\sum\limits_{j=1}^{i-1}(-1)^{i+j+1}jC^j_n(p^{n-1}-1)}{2C^{i-1}_{n-1}(p^n-1)}>i.$$
Since $\sum\limits_{j=1}^{i-1}(-1)^{i+j+1}j\cdot C^j_n(p^{n-1}-1)>0$, so we only have to show that
$$\frac{n\cdot C^{i-1}_{n-1}p^{n-1}(p-1)}{2C^{i-1}_{n-1}(p^n-1)}=\frac{n(p^n-p)}{2(p^n-1)}>i.$$
The above inequality is trivial when $n>2\cdot i\geq 2$.

$(2)$. Since $B^1_X$ is slope semi-stable(cf. \cite[Theorem 2.3]{Sun10ii}), and by Lemma \ref{Lemma:BxZx} we have $\mu(B^1_X)>\mu(\Omg^1_X)$ if $n\geq 3$. Then by exact sequence $$0\rightarrow B^1_X\rightarrow Z^1_X\rightarrow \Omg^1_X\rightarrow 0,$$ we know the Harder-Narasimhan filtration of $Z^1_X$ is $0\subset B^1_X\subset Z^1_X$.
\end{proof}

\begin{Proposition}\label{Prop:BxZx0}
If $\Omg^1_X$ is slope semi-stable with $\mu(\Omg^1_X)=0$. Then $B^i_X(1\leq i\leq n)$ and $Z^i_X(1\leq i\leq n-1)$ are slope strongly semi-stable.
\end{Proposition}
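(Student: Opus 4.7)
The plan is to run a double induction on $i$ using the two fundamental short exact sequences
\[
0\longrightarrow Z^{i-1}_X\longrightarrow {F_X}_*(\Omg^{i-1}_X)\longrightarrow B^i_X\longrightarrow 0,\qquad 0\longrightarrow B^i_X\longrightarrow Z^i_X\longrightarrow \Omg^i_X\longrightarrow 0,
\]
combined with the ``trivial remark'' already invoked in the proof of Proposition \ref{SemiStabT^l}: in a short exact sequence of torsion free sheaves in which all three terms have the same slope, the middle term is slope strongly semi-stable if and only if the outer two terms are. So the strategy reduces to (a) checking that in our situation all sheaves involved have slope $0$, and (b) feeding the right strongly semi-stable sheaves into the two sequences.

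First I would collect the slope information. Since $\mu(\Omg^1_X)=0$, the formula of Lemma \ref{Lemma:BxZx} forces $\dg(B^i_X)=\dg(Z^i_X)=0$, while $\mu(\Omg^i_X)=i\cdot\mu(\Omg^1_X)=0$ and the formula $\mu({F_X}_*(\Omg^{i-1}_X))=\tfrac{n(p-1)}{2p}\mu(\Omg^1_X)+\tfrac{1}{p}\mu(\Omg^{i-1}_X)=0$ (recalled just before Lemma \ref{Lemma:BxZx}) disposes of the remaining terms. Thus both exact sequences above consist of slope-zero sheaves, and the trivial remark applies verbatim.

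Next I would verify the two external inputs that are fed into the induction. By the Mehta--Ramanathan theorem (the $\mu_{\max}(\Omg^1_X)\leq 0$ version), the slope semi-stable sheaf $\Omg^1_X$ is slope strongly semi-stable; since exterior products of strongly semi-stable sheaves are strongly semi-stable in positive characteristic, each $\Omg^i_X=\bigwedge^i\Omg^1_X$ is strongly semi-stable with slope $0$. Applying Proposition \ref{FroDirIm} (the $\mu(\Omg^1_X)=0$ clause) to each $\Omg^{i-1}_X$ then yields that ${F_X}_*(\Omg^{i-1}_X)$ is slope strongly semi-stable for every $i$.

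The induction on $i$ now runs smoothly. The base case is trivial: $Z^0_X=\mathscr{O}_X$ is strongly semi-stable. For the step, assume $Z^{i-1}_X$ is strongly semi-stable. The first exact sequence, with strongly semi-stable outer terms $Z^{i-1}_X$ and ${F_X}_*(\Omg^{i-1}_X)$ and all slopes equal to $0$, forces $B^i_X$ to be strongly semi-stable; then the second exact sequence, with strongly semi-stable outer terms $B^i_X$ and $\Omg^i_X$ again sharing slope $0$, forces $Z^i_X$ to be strongly semi-stable. This yields the claim for $B^i_X$ in the range $1\leq i\leq n$ and for $Z^i_X$ in the range $1\leq i\leq n-1$. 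I do not foresee a genuine obstacle; the only point that needs a little care is making sure, at each stage of the induction, that the slope-zero hypothesis lets one invoke the trivial remark in its ``outer two imply middle'' direction (for the second sequence) as well as in its ``middle and one outer imply the other'' direction (for the first sequence).
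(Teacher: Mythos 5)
Your proof is correct and follows essentially the same route as the paper: all the sheaves in play have slope zero, $\Omg^i_X$ is strongly semi-stable by Mehta--Ramanathan, ${F_X}_*(\Omg^{i-1}_X)$ is strongly semi-stable by Proposition \ref{FroDirIm}, and the two standard exact sequences finish the argument. The only (harmless) difference is that your induction is not needed: since ${F_X}_*(\Omg^{i-1}_X)$ is the \emph{middle} term of the first sequence and all slopes agree, both $Z^{i-1}_X$ and $B^i_X$ fall out of that single sequence at once, which is how the paper concludes.
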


\begin{proof}
By \cite[Theorem 2.1]{MehtaRamanathan83}, we have $\Omg^1_X$ is slope strongly semi-stable, hence $\Omg^{i}_X$ is slope strongly semi-stable for any integer $1\leq i\leq n$. Then by Proposition \ref{FroDirIm}, ${F_X}_*(\Omg^{i}_X)(1\leq i\leq n)$ are slope strongly semi-stable with $\mu({F_X}_*(\Omg^{i}_X))=0$. By Lemma \ref{Lemma:BxZx} we have $\mu(B^i_X)=\mu(Z^i_X)=0$, $1\leq i\leq n$. Then the Proposition follows from the exact sequence $0\rightarrow Z^{i-1}_X\rightarrow{F_X}_*(\Omg^{i-1}_X)\rightarrow B^i_X\rightarrow 0.$
\end{proof}

\begin{Proposition}\label{Prop:BnX}
If $\mu(\Omg^1_X)>0$ and $\mathrm{T}^l(\Omg^1_X)(0\leq l\leq n(p-1))$ are slope semi stable. Then for any subsheaf $B\subset {F_X}_*\omega_X$ with $0<\rk(B)<\rk({F_X}_*\omega_X)$, we have
$$\mu(B)-\mu(B^n_X)\leq -\frac{n(p-1)(p^n-\rk(B)-1)}{2p(p^n-1)\cdot \rk(B)}\mu(\Omg^1_X).$$
In particular, $B^n_X$ is slope stable.
\end{Proposition}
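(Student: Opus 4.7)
The strategy is to refine the canonical filtration analysis underlying Sun's bound on $\mathrm{I}({F_X}_*(\E))$. By Theorem \ref{CanonicalFiltration}, the filtration $0=V_{n(p-1)+1}\subset\cdots\subset V_0=F^*_X{F_X}_*(\omega_X)$ has graded pieces $V_l/V_{l+1}\cong \omega_X\otimes\mathrm{T}^l(\Omg^1_X)$. By Lemma \ref{RepT}, every term in the resolution of $\mathrm{T}^l(\Omg^1_X)$ has slope $l\mu(\Omg^1_X)$, so $\mu(\mathrm{T}^l(\Omg^1_X))=l\mu(\Omg^1_X)$ and $\mu(V_l/V_{l+1})=(n+l)\mu(\Omg^1_X)$; under the hypothesis that each $\mathrm{T}^l(\Omg^1_X)$ is slope semi-stable, each $V_l/V_{l+1}$ is slope semi-stable.

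For a subsheaf $B\subsetneq {F_X}_*(\omega_X)$ of rank $r$, the pullback $F^*_X(B)$ is $\nabla_{\mathrm{can}}$-horizontal. Setting $B_l:=F^*_X(B)\cap V_l$ and $r_l:=\rk(B_l/B_{l+1})$, we have $\sum_l r_l=r$. Following the argument of \cite[Lemma 4.4]{Sun08} (already used in the proof of Theorem \ref{Thm:DiIm-}), one obtains
\[
\mu(B)-\mu({F_X}_*(\omega_X))\leq -\frac{\mu(\Omg^1_X)}{p\cdot r}\sum_{l=0}^{n(p-1)}\left(\frac{n(p-1)}{2}-l\right)r_l.
\]
A direct computation from Lemma \ref{Lemma:BxZx} gives $\mu(B^n_X)=\mu({F_X}_*(\omega_X))-n(p-1)\mu(\Omg^1_X)/(2p(p^n-1))$, so the asserted inequality in the proposition is equivalent to the combinatorial bound
\[
\sum_{l=0}^{n(p-1)} l\cdot r_l\leq \frac{n(p-1)(r-1)}{2}.
\]

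The main obstacle is establishing this bound, which does not follow from semi-stability of the $V_l/V_{l+1}$ alone. The extra input is the $\nabla_{\mathrm{can}}$-horizontality of $F^*_X(B)$: the induced map $B_l/B_{l+1}\to B_{l-1}/B_l\otimes\Omg^1_X$ on the graded is the restriction of the ambient injection $V_l/V_{l+1}\hookrightarrow V_{l-1}/V_l\otimes\Omg^1_X$ (which is injective for $l\geq 1$ by direct inspection of the derivation map $\mathrm{T}^l(V)\to\mathrm{T}^{l-1}(V)\otimes V$, $v(k_1,\ldots,k_n)\mapsto\sum_i k_i\,v(k_1,\ldots,k_i-1,\ldots,k_n)\otimes e_i$), so vanishing of $B_{l-1}/B_l$ forces vanishing of $B_l/B_{l+1}$. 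Iterating this constraint identifies $\bigoplus_l B_l/B_{l+1}\subset\omega_X\otimes\bigoplus_l\mathrm{T}^l(\Omg^1_X)$ with $\omega_X$ tensored by the span of an order ideal $S$ of monomials in the truncated symmetric algebra $\bigoplus_l\mathrm{T}^l(\Omg^1_X)\cong k[x_1,\ldots,x_n]/(x^p_i)$, of size $|S|=r$. The inequality then reduces to the purely combinatorial statement $\sum_{\alpha\in S}(\alpha_1+\cdots+\alpha_n)\leq n(p-1)(|S|-1)/2$ for any proper order ideal $S\subsetneq[0,p-1]^n$, with equality precisely when $S=[0,p-1]^n\setminus\{(p-1,\ldots,p-1)\}$, corresponding to $r=p^n-1$ and $B=B^n_X$.

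Combining these estimates yields the bound of the proposition. For the slope-stability statement, any proper subsheaf of $B^n_X$ has rank $r<p^n-1$, hence $p^n-r-1>0$ and the bound becomes strict: $\mu(B)<\mu(B^n_X)$, which completes the proof that $B^n_X$ is slope stable.
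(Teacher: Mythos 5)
Your overall skeleton is the same as the paper's final assembly: both proofs rest on the single inequality $\mu(B)\leq\mu({F_X}_*\omega_X)-\frac{n(p-1)}{2p\cdot\rk(B)}\mu(\Omg^1_X)$ together with the computation $\mu(B^n_X)=\mu({F_X}_*\omega_X)-\frac{n(p-1)}{2p(p^n-1)}\mu(\Omg^1_X)$ from Lemma \ref{Lemma:BxZx}, and your algebra reducing the proposition to $\sum_l l\cdot r_l\leq\frac{n(p-1)(r-1)}{2}$ is correct and exactly equivalent to that inequality. The difference is that the paper obtains the key inequality by quoting \cite[Theorem 2.3]{Sun10ii} as a black box, whereas you try to reprove it via the canonical filtration, and that is where your argument breaks down.

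The gap is the step ``Iterating this constraint identifies $\bigoplus_l B_l/B_{l+1}$ with $\omega_X$ tensored by the span of an order ideal of monomials.'' The injectivity of $V_l/V_{l+1}\hookrightarrow (V_{l-1}/V_l)\otimes\Omg^1_X$ restricted to $B_l/B_{l+1}$ gives only that $r_{l-1}=0$ forces $r_l=0$ (so the nonzero $r_l$ occupy an initial interval) and that $r_l\leq n\cdot r_{l-1}$; at the generic point $B_l/B_{l+1}$ is just \emph{some} subspace of $\mathrm{T}^l((\Omg^1_X)_\eta)$ mapped by the derivation into $(B_{l-1}/B_l)\otimes(\Omg^1_X)_\eta$, and nothing you say forces it to be spanned by monomials in any basis. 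This is not a cosmetic omission: the weaker constraints you actually establish do \emph{not} imply the needed bound. For example the rank vector $r_0=r_1=\cdots=r_{n(p-1)}=1$ satisfies both constraints, yet $\sum_l l\cdot r_l=\frac{n(p-1)(n(p-1)+1)}{2}>\frac{n(p-1)\cdot n(p-1)}{2}=\frac{n(p-1)(r-1)}{2}$. Excluding such configurations is precisely the content of Sun's Theorem 2.3, whose proof also uses the semi-stability hypothesis on the $\mathrm{T}^l(\Omg^1_X)$ in an essential way --- a hypothesis your combinatorial step never invokes, which is another sign something is missing. To repair the proof, either cite \cite[Theorem 2.3]{Sun10ii} for the inequality $\mu(B)\leq\mu({F_X}_*\omega_X)-\frac{n(p-1)}{2p\cdot\rk(B)}\mu(\Omg^1_X)$ as the paper does, or give an actual argument for the order-ideal reduction (and then verify the extremal combinatorial inequality over proper order ideals, which you also only assert).
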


\begin{proof} Since $0<\rk(B)<\rk({F_X}_*\omega_X)$, by Theorem \cite[Theorem 2.3]{Sun10ii}, we have
$$\mu(B)\leq\mu({F_X}_*\omega_X)-\frac{n(p-1)}{2p\cdot\rk(B)}\cdot\mu(\Omg^1_X)=\frac{n(p+1)\cdot \rk(B)-n(p-1)}{2p\cdot \rk(B)}\cdot\mu(\Omg^1_X).$$
By Lemma \ref{Lemma:BxZx} and the combinational formula $\sum\limits_{j=0}\limits^n(-1)^jj\cdot C^j_n=0$, we have
$$\mu(B)-\mu(B^n_X)\leq-\frac{n(p-1)(p^n-\rk(B)-1)}{2p(p^n-1)\cdot \rk(B)}\cdot\mu(\Omg^1_X).$$

Let sub-sheaf $B\subseteq B^n_X$ with $0<\rk(B)<\rk(B^n_X)=p^n-1$. Then $\mu(B)<\mu(B^n_X)$. Hence, $B^n_X$ is slope stable.
\end{proof}


\begin{thebibliography}{99}

\bibitem{Gieseker73} D. Gieseker: Stable vector bundles and the Frobenius morphism. \emph{Ann. \'{E}cole Norm. Sup.} 6 (1973), 95-101.

\bibitem{Grothendieck67} A. Grothendieck: \emph{Local cohomology}. Lecture Notes in Mathematics 41, Springer Verlag, Heidelberg, 1967.

\bibitem{Joshi04} K. Joshi: Stability and locally exact differentials on a curve. \emph{Comptes rendus. Math\'{e}matique}, 338 (2004), no. 11, 869-872.

\bibitem{JRXY06} K. Joshi, S. Ramanan, E. Z. Xia, J.-K. Yu: On vector bundles destabilized by Frobenius pull-back. \emph{Compos. Math.} 142 (2006), 616-630.

\bibitem{Katz70} N. M. Katz: Nilpotent connections and the monodromy theorem: Applications of a result of Turrittin. \emph{Publ. Math., Inst. Hautes\'{e}tud. Sci.} 39 (1970), 175-232.

\bibitem{KitadaiSumihiroII08} Y. Kitadai, H. Sumihiro: Canonical filtrations and stability of direct images by Frobenius morphisms II. \emph{Hiroshima Math. J.} 38 (2) (2008), 243-261.

\bibitem{Langer04i} A. Langer: Semistable sheaves in positive characteristic. \emph{Ann. of Math.} (2) 159 (2004), 251-276; Addendum, \emph{Ann. of Math.} 160 (2004), 1211-1213.

\bibitem{Langer05} A. Langer: Moduli spaces of sheaves and principal G-bundles. in Algebraic Geometry: Seattle 2005, eds. D. Abramovich et al., \emph{Proc. Symp. Pure Math.} 80 (2009), 273-308.

\bibitem{MehtaRamanathan83} V. Mehta, A. Ramanathan: Homogeneous bundles in characteristic $p$. in: \emph{Algebraic Geometry-Open Problems} (Ravello, 1982), Lecture Notes in Math. 997 (1983), 315-320.

\bibitem{MehtaPauly07} V. Mehta, C. Pauly: Semistability of Frobenius direct images over curves. \emph{Bull. Soc. Math. France.} 135 (2007), no. 1, 105-117.

\bibitem{Raynaud82} M. Raynaud: Sections des fibrès vectoriels sur une courbe. \emph{Bull. Soc. Math. France.} 110 (1982), no. 1, 103-125.

\bibitem{Shatz77} S. S. Shatz: The decomposition and specializations of algebraic families of vector bundles. \emph{Comp. math.} 35 (1977), 163-187.

\bibitem{Simpson94} C. T. Simpson: Moduli of representations of the fundamental group of a smooth projective variety I. \emph{Publ. Math. IHES}. 79 (1994), 47-129.

\bibitem{Sun08} X. Sun: Direct images of bundles under Frobenius morphisms. \emph{Invent. Math.} 173 (2008), no. 2, 427-447.

\bibitem{Sun10ii} X. Sun: Stability of sheaves of locally closed and exact forms. \emph{J. Algebra.} 324 (2010), 1471-1482.
\end{thebibliography}
\end{document}